\newtheorem{theorem}{Theorem}[section]
\newtheorem{lemma}[theorem]{Lemma}
\newtheorem{corollary}[theorem]{Corollary}
\newtheorem{definition}[theorem]{Definition}
\newtheorem{proposition}[theorem]{Proposition}
\newtheorem{remark}[theorem]{Remark}
\numberwithin{equation}{section} 
\newcommand{\norm}[1]{\left\|#1\right\|}
\newcommand{\abs}[1]{\left|#1\right|}
\newcommand*{\R}{\ensuremath{\mathbb{R}}}
\newcommand*{\N}{\ensuremath{\mathbb{N}}}
\newcommand{\eps}{\varepsilon}
\newcommand*{\tr}{\ensuremath{\mathrm{tr\,}}}
\newcommand{\e}{\varepsilon}
\newcommand{\quotes}[1]{``#1''}
\newcommand{\dd}{\,\mathrm{d}}
\newcommand{\loc}{\mathrm{loc}}
\newcommand{\pt}{\partial_t}
\renewcommand{\phi}{\varphi}
\renewcommand{\MR}[1]{} 
\def\dist{\mathop{\rm dist}\nolimits}    
\def\div{\mathop{\rm div}\nolimits}    
\def\supp{\mathop{\rm supp}\nolimits}    
\def\tr{\mathop{\rm Tr}\nolimits}
  \DeclareMathOperator{\trace}{tr}
\newcommand{\be}{\begin{equation}}
\newcommand{\ee}{\end{equation}}
\title{Inhomogeneous incompressible Euler with codimension $1$ singular structures}
\author[M. Inversi]{Marco Inversi}
\address[M. Inversi]{Departement Mathematik und Informatik, Universit\"at Basel, CH-4051 Basel, Switzerland}
\email{marco.inversi@unibas.ch}
\author[A. Violini]{Alessandro Violini}
\address[A. Violini]{Departement Mathematik und Informatik, Universit\"at Basel, CH-4051 Basel, Switzerland}
\email{alessandro.violini@unibas.ch}
\date{\today}
\subjclass[2020]{35Q31 - 35D30 - 26A45 - 28A75.}
\keywords{Inhomogeneous Euler equations, energy dissipation, bounded variation, bounded deformation, normal traces}
\thanks{\textit{Acknowledgements}.
 The authors acknowledge the support of the SNF grant FLUTURA: Fluids, Turbulence, Advection No. 212573. A. Violini is supported by the Deutsche Forschungsgemeinschaft project “Inhomogeneous and compressible fluids: statistical solutions
and dissipative anomalies” within SPP 2410 Hyperbolic Balance Laws in Fluid Mechanics: Complexity,
Scales, Randomness (CoScaRa).}
\begin{document}

\begin{abstract}
This paper is concerned with the inhomogeneous incompressible Euler system. We establish a Duchon--Robert type approximation theorem for the distribution describing the local energy flux of bounded solutions. The velocity field is assumed to have bounded variation or bounded deformation with respect to the spatial variable. The density satisfies no-vacuum condition and has $BV$ regularity in space, allowing for a system made by two immiscible fluids separated by a Lipschitz hypersurface. By means of a careful analysis of the traces along hypersufaces of bounded vector fields with measure divergence, we show that the dissipation does not give mass to hypersurfaces of codimension one, even if the velocity field, the density and the pressure have jumps. This feature is specific of incompressible models. As a consequence, we show that the dissipation measure vanishes if it is concentrated on a countably rectifiable set of space-time codimension one, even if this is dense. For instance, in the simplest case of a system made by two incompressible immiscible fluids with $BV$ velocity, the dissipation measure vanishes as soon as the interface between the two fluids has (space-time) finite perimeter.  
\end{abstract}

\maketitle

\section{Introduction}

We consider the inhomogeneous incompressible Euler system 
\begin{equation}\label{IE} \tag{E}
\left\{\begin{array}{l}
\partial_t (\rho u) + \div( \rho u  \otimes u ) + \nabla p= f  \\
\partial_t \rho + \div(\rho u) = 0 \\ 
\div u=0
\end{array}\right. \qquad \text{ in } \Omega \times (0,T)
\end{equation}
where $\Omega = \mathbb{T}^d, \R^d$ or any open subset of the Euclidean space. The system \eqref{IE} describes the conservation of momentum and mass in an incompressible ideal fluid system with non constant density under the action of the external force field $f$. We will mainly focus on the analysis in the interior of $\Omega \times (0,T)$. Thus, the boundary conditions an the initial data are not given. If not otherwise specified, the function spaces used in this note are tacitly intended to be in $\Omega, (0,T)$ or $\Omega \times (0,T)$. This choice is made only for convenience. 

\subsection{A brief background} 
A classical computation shows that smooth solutions to \eqref{IE} satisfy the local energy balance 
\begin{equation} \label{eq: local energy balance}
\partial_t \left(\rho \frac{\abs{u}^2}{2}\right) +\div  \left(\left( \rho \frac{\abs{u}^2}{2}+p\right)u\right) = - D[u] + f\cdot u \qquad \text{in } \Omega \times (0,T), 
\end{equation}
with $D[u] \equiv 0$. However, for irregular solutions to \eqref{IE}, it is expected that \eqref{eq: local energy balance} holds with a non-trivial distribution $D[u]$ describing the failure of the exact local energy balance. In the homogeneous case, i.e. when $\rho \equiv 1$, the study of rough solutions to \eqref{IE} is related to understanding some fundamental features of turbulence, such as the appearance of anomalous dissipation phenomena. Indeed, in 1949 Onsager proposed to look at \eqref{IE} as a universal model for turbulent flows \cite{O49}, giving a deterministic point of view to the Kolmogorov's theory of turbulence (see the monograph by Frisch \cite{Frisch95} for an extensive presentation). In the homogeneous setting, Onsager predicted that solutions to \eqref{IE} possessing \emph{more than $\sfrac{1}{3}$ of derivative} should conserve the kinetic energy. In the last decades, these predictions have been validated by the mathematical research. In modern terms \cite{DR00}, when $\rho \equiv 1$, given $u \in L^{3}_{x,t}$ and $f \in L^{\sfrac{3}{2}}_{x,t}$, then $D[u]$ in \eqref{eq: local energy balance} is a well-defined distribution named after Duchon--Robert and $D[u] = \lim_{\e \to 0} D_\e[u]$, where\footnote{Here, the limit is intended in $\mathcal{D}'_{x,t}$ and $\eta$ is any smooth even convolution kernel, supported in $B_1$ and with unit mass and $x$ is picked in a compact set such that $x + \e z \in \Omega$. }
\begin{equation} \label{eq: DR formula}
    D_\e[u] : = \frac{1}{4} \int_{B_1} \nabla \eta(z) \cdot \frac{\delta_{\e z} u_t(x)}{\e} \abs{\delta_{\e z} u_t(x)}^2 \dd z , \qquad \delta_{\e z} u_t(x) := u(x+\e z, t) - u(x,t). 
\end{equation}
In \cites{Ey94, CET94} it is shown that $D_\e [u] \to 0$ strongly in $L^1_{x,t}$ as soon as $u \in L^3_t B^{\theta}_{3, \infty}$ for some $\theta > \sfrac{1}{3}$. On the other side, for $\theta<\sfrac{1}{3}$ convex integration methods \cites{DS13,Is18,BDSV19,GR24} give solutions in $C^\theta$ with non-trivial distribution $D[u]$. Subsequent improvements \cites{Is22,DK22} also provided H\"older continuous solutions with $D[u]>0$, which eventually have been extended to the whole Onsager supercritical range in the $L^3$-based framework \cites{GKN24_1,GKN24_2}. Although these results mathematically validate the Onsager prediction, the critical case $\theta = \sfrac{1}{3}$ remains open. For solutions to the homogeneous \eqref{IE} with this critical regularity, the sequence $\{ D_{\e}[u]\}_{\e}$ defined by \eqref{eq: DR formula} is merely bounded in $L^1_{x,t}$ and it is not known if it converges to $0$ (weakly or strongly). In this direction, energy conservation in $L^3 B^{\sfrac{1}{3}}_{3,\infty}$ is known under some additional assumptions preventing large oscillations of the velocity field at small scales (\cites{CCFS08, FW18,BGSTW19}). We refer to the companion paper \cite{DRINnew} and the reference therein for a detailed discussion on this principle. 

Recently, energy conservation in the homogeneous case for solutions in $L^\infty_{x,t} \cap L^1_t BV_x$ has been established in \cite{DRINV23}. This result is qualitatively different from the classical energy conservation ones for several reasons. First of all, in \cite{DRINV23} it is shown that $D_\e[u] \rightharpoonup 0$ for solutions in a class where the Duchon--Robert approximation \eqref{eq: DR formula} is bounded in $L^1_{x,t}$ and the strong $L^1$ convergence may fail. Moreover, $L^\infty_{x,t} \cap L^1_t BV_x$ embeds sharply into $L^3_t B^{\sfrac{1}{3}}_{3,\infty}$, in the sense that $BV_x \cap L^\infty_x$ does not embed into the closure of smooth functions with respect to the $B^{\sfrac{1}{3}}_{3, \infty}$ norm (see \cite{DRINnew}*{Theorem 1.2}). To the best of our knowledge, this is the first result available in such a critical setting. Unlikely the proofs in \cites{DR00, CET94, CCFS08, FW18, BGSTW19} that apply simultaneously to compressible and incompressible models, this argument fails in the compressible setting, as confirmed by the  formation of shock waves in 1D Burgers (see the discussion after \cref{thm: no dissipation}). Building on a remarkable idea by Ambrosio \cite{Ambr04}, the analysis in \cite{DRINV23} relies on the structure of $BV$ incompressible vector field, where increments at small scales are controlled by the distributional gradient even if they are not small. The formula \eqref{eq: DR formula} shows the explicit dependence of the approximation $D_\e[u]$ on the convolution kernel $\eta$. Since the Duchon--Robert measure is independent on $\eta$, this choice can be optimized. By standard measure theory, it is proved that  
\begin{equation}
    \abs{D[u]} \lesssim \left( \inf_{\eta} \int_{B_1} \abs{\nabla \eta(z)\cdot M(x,t) z} \dd z \right) \nu, \qquad \text{where } M(x,t) = \frac{d \nabla u_t}{\abs{d \nabla u_t}}(x), \quad \nu = \abs{\nabla u_t} \otimes d t,
\end{equation}
where the infimum is computed among all the smooth even convolution kernels with compact support and unit mass. A clever computation by Alberti \cite{C09}*{Lemma 2.6.2} shows that the infimum above equals $\tr M_{x,t}$. Thus, by the divergence-free condition we conclude that $D[u] \equiv 0$. Particular solutions to homogeneous \eqref{IE} in $L^\infty_{x,t} \cap L^1_t BV_x$ are the Vortex-Sheets, that is solutions whose vorticity is concentrated on a regular hypersurface. This case has been investigated by Shvydkoy \cite{S09}, who proved energy conservation by a completely different argument. In a nutshell, Vortex-Sheets may dissipate kinetic energy only on the hypersurfaces where the vorticity is concentrated, since they are regular elsewhere. However, if the time evolution of the sheets is regular enough, by the dynamical properties of the incompressible model, it is shown that the Duchon--Robert measure does not give mass to the hypersurface where the velocity and the pressure are allowed to jump. This gives a completely different point of view on the dissipation mechanism, that has been generalized by \cite{DRINnew} in the homogeneous setting. 

In the inhomogeneous case, the situation is more complicated, since a non-trivial distribution $D[u]$ in \eqref{eq: local energy balance} may arise due to irregularity both in the velocity field and in the density. To the best of our knowledge, the analogous of the Kolmogorov and Onsager theory for non-homogeneous fluids has not been developed, but it is reasonable to conjecture that only a limited amount of regularity of $\rho, u, p$ is preserved by the vanishing viscosity limit. Therefore, irregular solutions to \eqref{IE} should not satisfy the exact energy balance. We refer to \cites{CY19, LS16, FGSW17} for some results on energy conservation for \eqref{IE}. In these cases, under different assumptions, the proofs are based on commutator estimates in Besov spaces inspired by \cites{CET94, DR00, CCFS08} for the homogeneous case. Our analysis aims to extend the result in \cites{DRINV23, DRINnew} to the inhomogeneous case. Unlike the previous energy conservation results, we point out that our analysis covers the case of a system made by two immiscible inviscid fluids that are separated by a hypersurface, i.e. a system where the density takes two values and it jumps on a set of codimension one. 

\subsection{Main results} Weak solutions to \eqref{IE} are defined in the usual distributional sense. 

\begin{definition} \label{d: weak solution}
We say that $(\rho, u, p) \in L^\infty_{x,t}$ solves the inhomogeneous incompressible Euler equations \eqref{IE} in $\Omega \times (0,T)$ with force $f \in L^1_{x,t}$ if it holds 
\begin{align} \label{eq: momentum}
    \int_0^T \int_\Omega \left[ \rho u \cdot \pt \varphi  +   \rho u \otimes u : \nabla  \varphi  +  p \div  \varphi  +  f \cdot \varphi \right] \dd x \dd t = 0 & \qquad \forall \varphi \in C^\infty_c(\Omega \times (0,T); \R^d), 
    \\ \int_0^T \int_\Omega \rho \left[  \partial_t \varphi + u \cdot \nabla \varphi \right] \dd x \dd t = 0 & \qquad \forall \varphi \in C^\infty_c(\Omega \times (0,T)), \label{eq: transport density}
    \\ \int_\Omega u_t(x) \cdot \nabla \varphi(x) \dd x = 0 & \qquad \forall \varphi \in C^\infty_c(\Omega) \quad \text{for a.e. } t \in (0,T). 
\end{align}
\end{definition}
By a plain density argument, the distributional formulation holds with bounded Lipschitz continuous test function vanishing at the (space-time or spatial) boundary. Throughout this note we will always assume that $\rho$ satisfies the \emph{no-vacuum condition}
\begin{equation} \label{no-vacuum}
    \rho_t(x) \geq C >0 \qquad \text{for a.e. } (x,t) \in \Omega \times (0,T). 
\end{equation}
Since $\rho$ is formally advected by $u$, this condition is preserved along the time evolution in the smooth setting. In the simplest scenario, the density takes two values, corresponding to a system of two immiscible incompressible inviscid fluids that are separated by a hypersurface. To describe such a model, it seems reasonable to consider densities with spatial $BV$ regularity, since this allows for jumps on sets of codimension one in space. We consider a velocity field with spatial $BV$ or $BD$ regularity, which corresponds to the case in which (some) spatial increments scale linearly in $L^1$ (see \cref{ss: BV and BD} for precise definitions). Roughly speaking (see \cref{l: char of BV-BD}), $u$ has bounded variations if and only if 
$$\norm{ u(\cdot + \ell y) - u(\cdot)}_{L^1} \sim \ell \qquad \forall \abs{y} =1, \, \ell >0, $$
while $u$ with bounded deformation is characterized by a control of the \emph{longitudinal} increment, i.e. 
$$\norm{ y \cdot( u(\cdot + \ell y) - u(\cdot))}_{L^1} \sim \ell \qquad \forall \abs{y} =1, \, \ell >0. $$ 
Motivated by the results in \cites{DRINV23, DRINnew} for the homogeneous case, we study the concentration of the Duchon--Robert measure associated to a velocity field and a density with the critical regularity described above (see \cref{ss: curves of measures} and \cref{ss: BV and BD} for precise definitions).

\begin{theorem} \label{thm: DR approximation}
Let $(\rho,u, p) \in L^\infty_{x,t}$ be a weak solution to \eqref{IE} with force $f \in L^1_{x,t}$ according to \cref{d: weak solution} and let $D[u] $ be the Duchon--Robert distribution defined by \eqref{eq: local energy balance}. Assume that $\rho \in L^1_t BV_x$ and $\rho$ satisfies the no-vacuum condition \eqref{no-vacuum}\footnote{Here, we denote by $\nabla^s \rho_t$ the singular part of $\nabla \rho_t$ with respect to $\mathcal{L}^{d}$. }. 
\begin{enumerate}
    \item[(i)] If $u \in L^1_t BV_x$, then $D[u]$ is a Radon measure such that 
    \begin{equation} \label{eq: upper bound DR in BV}
        \abs{D[u]} \ll \abs{\nabla^s \rho_t} \otimes \dd t . 
    \end{equation}
    \item[(ii)] If $u \in L^1_t BD_x$, then $D[u]$ is a Radon measure such that\footnote{Here, we denote by $E^s u_t$ the singular part of the symmetric gradient of $u_t$ with respect to $\mathcal{L}^d$. } 
    \begin{equation} \label{eq: upper bound DR in BD}
        \abs{D[u]} \ll  \abs{\nabla^s \rho_t} \otimes \dd t + \abs{E^s u_t} \otimes \dd t. 
    \end{equation} 
\end{enumerate}
\end{theorem}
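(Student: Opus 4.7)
My plan is to adapt the Duchon--Robert approximation scheme of \cite{DRINV23, DRINnew} to the inhomogeneous setting. The first step is to exhibit an explicit sequence $\{D_\varepsilon[\rho, u]\}_\varepsilon$ of distributions, obtained by mollifying the momentum equation at scale $\varepsilon$ and pairing with $u^\varepsilon := \eta_\varepsilon \ast u$, that converges to $D[u]$ in $\mathcal{D}'_{x,t}$ and admits a pointwise representation in terms of the finite differences $\delta_{\varepsilon z} u_t$ and $\delta_{\varepsilon z} \rho_t$ weighted by the gradient kernel $\nabla \eta/\varepsilon$, of the schematic form
\begin{equation*}
D_\varepsilon[\rho, u](x, t) = \frac{1}{\varepsilon}\int_{B_1} \nabla\eta(z) \cdot \bigl[ \rho_t(x)\,\delta_{\varepsilon z}u_t(x)\,|\delta_{\varepsilon z}u_t(x)|^2 + \delta_{\varepsilon z}\rho_t(x)\,u_t(x)\otimes\delta_{\varepsilon z}u_t(x) + \cdots\bigr]\,dz,
\end{equation*}
plus pressure and force contributions that vanish in $\mathcal{D}'_{x,t}$ thanks to $\div u = 0$ and the $L^\infty$ bounds on $(\rho, u, p)$. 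Convergence $D_\varepsilon \to D[u]$ in $\mathcal{D}'_{x,t}$ follows directly from \cref{d: weak solution} by testing against smooth compactly supported functions.

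For part (i), I would use the $L^1_t BV_x$ characterization of finite differences (cf.\ \cref{l: char of BV-BD}), according to which $|\delta_{\varepsilon z}u_t(x)|/\varepsilon$ is controlled by a directional average of $|\nabla u_t|$ along the segment $[x, x + \varepsilon z]$, and its analogue for $\rho$, to obtain a total variation bound for $D_\varepsilon$ in terms of $|\nabla u_t|\otimes dt$ and $|\nabla \rho_t|\otimes dt$. Splitting into absolutely continuous and singular parts, the absolutely continuous pieces give vanishing contributions in the limit $\varepsilon \to 0$ by dominated convergence (the integrands being uniformly bounded and the finite differences going to zero a.e.), while the singular parts leave a bound of the form
\begin{equation*}
|D[u]| \leq C\Bigl(\int_{B_1}|\nabla\eta(z)\cdot M_u(x,t)z|\,dz\Bigr)\,|\nabla^s u_t|\otimes dt + C\Bigl(\int_{B_1}|\nabla\eta(z)|\,dz\Bigr)\,|\nabla^s\rho_t|\otimes dt,
\end{equation*}
with $M_u := d\nabla^s u_t/d|\nabla^s u_t|$. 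By Alberti's clever trick \cite{C09}*{Lemma 2.6.2}, optimizing over all admissible even kernels $\eta$ replaces the first prefactor by $|\tr M_u(x,t)|$, which vanishes $|\nabla^s u_t|$-a.e.\ thanks to $\div u = 0$; only the $|\nabla^s\rho_t|\otimes dt$ term survives, yielding \eqref{eq: upper bound DR in BV}.

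For part (ii), since in $BD$ only the longitudinal increment $z \cdot \delta_{\varepsilon z}u$ is controlled by $|E u_t|$, I would first rewrite $D_\varepsilon$ symmetrically so that only such longitudinal differences appear, then repeat the argument with $Eu_t$ and $E^s u_t$ in place of $\nabla u_t$ and $\nabla^s u_t$. The essential difference is that the corresponding Alberti-type optimization only reduces the prefactor in front of $|E^s u_t|$ to a coefficient depending on the quadratic form $\langle M_u z, z\rangle$, which cannot be forced to vanish using $\tr M_u = 0$ alone; therefore $|E^s u_t|\otimes dt$ survives, producing \eqref{eq: upper bound DR in BD}. I expect the main obstacle to be the careful treatment of \emph{cross} interaction terms mixing $\delta_{\varepsilon z}\rho_t$ with $\delta_{\varepsilon z}u_t$: these must be absorbed into the claimed right-hand side using the no-vacuum condition \eqref{no-vacuum} to invert density weights, together with the normal trace theory for $BV$ vector fields with measure divergence (as alluded to in the abstract) to make sense of $u$ restricted to the jump set of $\rho$.
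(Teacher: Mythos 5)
There are two genuine gaps in your scheme, both at points where the paper makes a deliberate structural choice that your outline replaces with a more naive one.

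First, the pairing. You propose to mollify the momentum equation and pair with $u^\varepsilon=\eta_\varepsilon\ast u$. In the inhomogeneous setting this does not close: the quantity $u_\varepsilon\cdot\partial_t(\rho u)_\varepsilon$ is not the time derivative of an energy density, and the complementary term $(\rho u)_\varepsilon\cdot\partial_t u_\varepsilon$ is out of reach because $u$ itself satisfies no evolution equation — only $\rho$ and $\rho u$ do, so only $\partial_t\rho_\varepsilon$ and $\partial_t(\rho u)_\varepsilon$ are controlled (in $L^\infty$) by the mollified equations. The paper instead tests the weak momentum equation with $\psi_\varepsilon=\varphi\,(\rho u)_\varepsilon/\rho_\varepsilon$, which is exactly engineered so that $\partial_t\bigl(|(\rho u)_\varepsilon|^2/(2\rho_\varepsilon)\bigr)$ appears and every time derivative can be replaced via the mollified continuity and momentum equations; the no-vacuum condition \eqref{no-vacuum} together with $\eta\geq 0$ keeps $\rho_\varepsilon$ bounded away from zero so that $\psi_\varepsilon$ is an admissible Lipschitz test function. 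Your "$+\cdots$" hides precisely the terms that this choice is designed to tame, and the vague appeal to normal-trace theory at the end (which the paper uses only for \cref{thm: no dissipation}, not here) does not substitute for it.

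Second, and more decisively, the final optimization in part (i) is inconsistent as you have set it up. Your intermediate bound carries the prefactor $\int_{B_1}|\nabla\eta(z)|\,dz$ in front of $|\nabla^s\rho_t|\otimes dt$. The Alberti--Ambrosio optimizing kernels that drive $\int_{B_1}|\nabla\eta(z)\cdot M_u z|\,dz$ down to $|\tr M_u|=0$ are highly anisotropic, and along that sequence $\int_{B_1}|\nabla\eta|\,dz\to\infty$; you cannot optimize the velocity prefactor without destroying the density bound. The paper avoids this by organizing the density errors as commutators of the form $(\operatorname{div}(\rho u))_\varepsilon-u\cdot(\nabla\rho)_\varepsilon$ (via \cref{L:divdec}), whose $L^1$ norm is controlled by $|\nabla^s\rho_t|(Q_t)$ with a constant that uses only $\eta\geq 0$ and $\int\eta=1$ — hence uniform over $\mathcal K$ — so that the infimum over $\eta$ acts on the velocity term alone. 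To repair your argument you would need to prove the same $\eta$-uniformity for your cross terms $\delta_{\varepsilon z}\rho_t\,u_t\otimes\delta_{\varepsilon z}u_t$, which your stated bound does not provide. The BD part of your outline inherits both issues; note also that the paper does no optimization there at all, but simply fixes a radial kernel and uses $\tr E^a u_t=0$ to kill the absolutely continuous contribution.
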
 
The above theorem is a particular case of the more general \cref{thm: DR approximation sharp}. The proof provides an explicit formula for the defect measure $D[u]$. Since it is rather long, we did not included it in the statement. If $\rho \in L^1_t W^{1,1}_x$, our computation shows 
\begin{equation} \label{eq: DR formula simplified}
    D[u] = \lim_{\e \to 0}  \frac{\rho}{2 \rho_\e} [u \otimes u : \nabla (\rho u)_\e - u \cdot \div((\rho u \otimes u)_\e) ] \qquad \text{in } \mathcal{D}'_{x,t},
\end{equation}
which reduces to the Duchon--Robert formula in the case $\rho \equiv 1$ after some manipulations. If the velocity field has bounded variation, we show that the dissipation is concentrated on the jump set of the density (see $(i)$ in \cref{thm: DR approximation}). In particular, if $\rho \in L^1_t W^{1,1}_x$, by \eqref{eq: upper bound DR in BV} we infer that $D[u] \equiv 0$, coherently with the energy conservation result in the homogeneous case \cite{DRINV23}*{Theorem 1.2}. The first statement of \cref{thm: DR approximation} is done by a classical mollification type argument and a careful analysis of the error terms produced along the computations. The oscillations of the velocity field do not produce dissipation, as shown by the first author in \cite{DRINV23} for the homogeneous case. The concentration on the singular set of the velocity field is ruled out by choosing appropriately the convolution kernel in the mollification process, an idea of Ambrosio \cite{Ambr04}. As in the homogeneous case, the incompressiblity of the velocity field plays a key role in the optimization procedure. 

Then we investigate the case of velocity fields with bounded deformation (see $(ii)$ in \cref{thm: DR approximation}). We show that the dissipation is concentrated on the support of the singular part of the gradient of the density and the singular part of the symmetric gradient of the velocity field. In this case, we point out that the concentration on the singular part of the symmetric gradient of the velocity cannot be ruled out, since the strategy of \cite{DRINV23} is not available. 

The second main result of this paper concerns with the accumulation of $D[u]$ on space-time hypersurfaces of codimension one (see \cref{ss: traces}). This is the analogous of \cite{DRINnew}*{Theorem 1.3} in the homogeneous case. We recall that a set $\Sigma \subset \R^d$ is said to be countably $\mathcal{H}^{d-1}$-rectifiable if we can write 
$$\Sigma = \mathcal N \cup \bigcup_{i \in \N} \Sigma_i, $$
where $\mathcal{H}^{d-1}(\mathcal{N})=0$ and $\Sigma_i \subset \R^d$ are (at most countably many) Lipschitz embedded hypersurfaces.

\begin{theorem} \label{thm: no dissipation} 
Let $(\rho, u, p) \in L^\infty_{x,t}$ be a weak solution to \eqref{IE} with $f \in L^1_{x,t}$ according to \cref{d: weak solution} and let $D[u]$ be the Duchon--Robert distribution defined by \eqref{eq: local energy balance}. Assume that $D[u]$ is a finite Borel measure on $\Omega \times (0,T)$ and $(\rho, u, p)$ have bilateral traces at any Lipschitz space-time hypersurface in the sense of \cref{d: inner/outer trace}. Then, for any countably $\mathcal{H}^d$-rectifiable set $\Sigma$ it holds that $\abs{D[u]}(\Sigma) = 0$. 
\end{theorem}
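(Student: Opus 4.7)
The plan is to view $D[u]$ as the space-time divergence of the energy-flux field $(U,F) := (\rho|u|^2/2, (\rho|u|^2/2 + p)u)$, up to the harmless $L^1$ remainder $f\cdot u$ coming from \eqref{eq: local energy balance}. Since $D[u]$ is a finite Radon measure by hypothesis, $(U,F)$ is a space-time divergence-measure field. Combined with the bilateral trace assumption, the generalized Gauss--Green theorem (cf.\ \cref{ss: traces}) identifies the restriction of $D[u]$ to any Lipschitz hypersurface $\Sigma_0 \subset \Omega \times (0,T)$ with a measure absolutely continuous with respect to $\mathcal{H}^d$, whose density is exactly the jump of the normal trace of $(U,F)$ across $\Sigma_0$. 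By countable subadditivity of $|D[u]|$ it is then enough to show this density vanishes $\mathcal{H}^d$-a.e.\ on every such $\Sigma_0$; the residual $\mathcal{H}^d$-null set in the rectifiable decomposition of $\Sigma$ is addressed at the end.

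Fix such $\Sigma_0$ with unit space-time normal $(\nu_x,\nu_t)$ and bilateral traces $(\rho^\pm, u^\pm, p^\pm)$, and write $[g] := g^+ - g^-$. Applying the same trace procedure to each equation in \cref{d: weak solution} yields the three Rankine--Hugoniot jump conditions: incompressibility gives $[u]\cdot\nu_x = 0$, so that $u^\pm \cdot \nu_x$ attain a common value $\alpha$; mass conservation gives $[\rho]\nu_t + [\rho u]\cdot\nu_x = [\rho](\nu_t + \alpha) = 0$; momentum conservation gives $[\rho u](\nu_t + \alpha) + [p]\nu_x = 0$. Applied to the local energy balance \eqref{eq: local energy balance} itself, the same procedure identifies the density of $D[u]\llcorner\Sigma_0$ with respect to $\mathcal{H}^d$ as the scalar
$$ J \;:=\; [\rho|u|^2/2](\nu_t + \alpha) + [p]\,\alpha. $$

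The identity $J = 0$ is then a pointwise algebraic consequence of the three jump relations. On the portion of $\Sigma_0$ where $[\rho] \neq 0$, the mass relation forces $\nu_t + \alpha = 0$; substituting into momentum gives $[p]\nu_x = 0$, and since horizontal time slices $\{\nu_x = 0\}$ contribute no dissipation (by the intrinsic weak time continuity of $\rho u$ and $\rho|u|^2$), one concludes $[p] = 0$ and hence $J = 0$. On the portion where $[\rho] = 0$, set $\rho^* := \rho^+ = \rho^-$; dotting the momentum relation with the average $(u^+ + u^-)/2$, and using $u^\pm \cdot \nu_x = \alpha$ together with the elementary identity $[|u|^2/2] = \tfrac{1}{2}(u^+ + u^-)\cdot[u]$, one obtains $\rho^*[|u|^2/2](\nu_t + \alpha) + [p]\,\alpha = 0$, which is precisely $J$ since $[\rho|u|^2/2] = \rho^*[|u|^2/2]$ in this case.

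The main obstacle is not the algebra above but the trace-theoretic justification of all four jump identities: one must construct Lipschitz test functions concentrating on one side of $\Sigma_0$ and pass to the limit while exactly recovering the bilateral trace values, which is the content of the normal-trace calculus for bounded divergence-measure fields developed in \cref{ss: traces} and which crucially uses the hypothesis in \cref{d: inner/outer trace}. Finally, for the $\mathcal{H}^d$-null residual $\mathcal{N}$ in the decomposition of a general countably $\mathcal{H}^d$-rectifiable $\Sigma$: the trace formula exhibits $|D[u]|\llcorner \Sigma_i$ as $|J|\,\mathcal{H}^d\llcorner \Sigma_i$ on every Lipschitz piece, so $|D[u]|$ is absolutely continuous with respect to $\mathcal{H}^d$ along the rectifiable portion; combined with a Besicovitch-type covering of $\mathcal{N}$ by small Lipschitz slices and the density bounds that the trace calculus provides in a tubular neighborhood of each $\Sigma_i$, this forces $|D[u]|(\mathcal{N}) = 0$ and closes the proof.
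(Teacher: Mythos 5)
Your proposal is correct and follows essentially the same route as the paper: identify $D[u]$ with the space-time divergence of the bounded field $\left(u\left(\rho\tfrac{|u|^2}{2}+p\right),\rho\tfrac{|u|^2}{2}\right)$, use \cref{P:div and normal trace}, \cref{c: composition trace} and \cref{t: distributional trace vs lebesgue trace} to reduce to Rankine--Hugoniot relations for the incompressibility, mass, momentum and energy fluxes, and close with pointwise algebra (the paper dots the momentum jump with $n_x$ and case-splits on $n_x=0$ and $[\rho]=0$, whereas you dot with the average velocity --- equivalent, and your appeal to ``weak time continuity'' is unnecessary since mass conservation makes the case $[\rho]\neq 0$, $\nu_x=0$ vacuous). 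The only vague point is your treatment of the residual null set $\mathcal N$, for which the paper simply invokes the standard fact that the divergence measure of a bounded vector field is absolutely continuous with respect to $\mathcal H^d$.
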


We show that, even if $u,\rho, p$ are allowed to jump on a space-time hypersurface $\Sigma$ of codimension one, then the dissipation does not accumulate on $\Sigma$. We point out that this feature is specific of incompressible models. It is well-known that there exist bounded solutions to the 1D Burgers equation with $BV$ regularity, whose dissipation set is a space-time line\footnote{ We recall that, in his original paper \cite{Burg48}, the author proposed 1D Burgers equation as a simplified model to capture the qualitative properties of hydrodynamical turbulence, such as the formation of dissipation layers, i.e. hypersurfaces that \textit{``play an important part in the energy exchange, as they represent the main regions where energy is dissipated''} \cite{Burg48}*{Section I}. }. In the proof of \cref{thm: no dissipation}, by the analysis of the normal traces \cites{DRINV23, CDRINVN24, DRINnew}, we exploit the dynamics of the Euler equations and the structure of the left hand side in \eqref{eq: local energy balance} as a space-time divergence. In the stationary case we show that particles that intersect a hypersurface $\Sigma$ tangentially do not dissipate energy due to the incompressibility condition, while particles that intersect $\Sigma$ transversally do not experience an instantaneous impulse for dynamical reasons. We refer to \cite{DRINnew}*{Section 1.1} for a description of the heuristic argument in the homogeneous stationary case, that can be adapted in our context (see also \cref{r:final comments stationary}).

As a corollary, we conclude that the Duchon--Robert measure vanishes identically whenever it is concentrated on a countably $\mathcal{H}^d$-rectifiable set. For completeness, we state the following result (see \cite{DRINnew}*{Corollary 1.4} for the analogous result in the homogeneous case). 

\begin{corollary} \label{cor: no dissipation BV/BD} 
Let $(\rho,u, p) \in L^\infty_{x,t}$ be a weak solution to \eqref{IE} with force $f \in L^1_{x,t}$ according to \cref{d: weak solution} and let $D[u] $ be the Duchon--Robert distribution defined by \eqref{eq: local energy balance}. Suppose that $\rho \in L^1_t BV_x \cap SBV_{x,t}$, $p$ has bilateral traces at any oriented Lipschitz space-time hypersurface in $\Omega \times (0,T)$ and either $u \in L^1_t BV_x$ or $u \in SBD_{x,t}$ in the sense of \cref{d: BD x-t}. Then, $D[u] \equiv 0$. 
\end{corollary}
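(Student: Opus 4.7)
The strategy is to combine \cref{thm: DR approximation} with \cref{thm: no dissipation}: the former bounds the Duchon--Robert measure by $|\nabla^s \rho_t| \otimes \dd t$ (in the $BV$ case) or by $|\nabla^s \rho_t| \otimes \dd t + |E^s u_t| \otimes \dd t$ (in the $BD$ case), while the latter rules out any Duchon--Robert mass on space-time $\mathcal{H}^d$-rectifiable sets. The role of the space-time $SBV_{x,t}$ and $SBD_{x,t}$ hypotheses is precisely to identify the supports of these spatial singular measures with space-time rectifiable sets, so that the two theorems become compatible.

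More concretely, the first step is to apply \cref{thm: DR approximation} and obtain the absolute continuity bound for $|D[u]|$ in either case. The second step is a $BV$ slicing argument in space-time: if $\rho \in SBV_{x,t}$, then the space-time distributional gradient $\nabla \rho$ has no Cantor part and the space-time jump set $J_\rho$ is countably $\mathcal{H}^d$-rectifiable. Slicing in time shows that for a.e.\ $t$ the spatial singular gradient $\nabla^s \rho_t$ is purely jump and carried by the time slice of $J_\rho$, so after integrating in $t$ the measure $|\nabla^s \rho_t| \otimes \dd t$ is concentrated on $J_\rho$. The analogous statement for $u \in SBD_{x,t}$ places $|E^s u_t| \otimes \dd t$ on the $\mathcal{H}^d$-rectifiable space-time jump set $J_u$. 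In particular, $|D[u]|$ is concentrated on the countably $\mathcal{H}^d$-rectifiable set $J_\rho \cup J_u$.

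The third step is to verify the hypotheses of \cref{thm: no dissipation}: the Radon measure $D[u]$ produced in \cref{thm: DR approximation} is finite on every compactly contained open subset and the conclusion $D[u] \equiv 0$ is local, so localization is harmless; bilateral traces of $\rho$ follow from the standard $BV$ trace theory applied in space-time, $p$ has bilateral traces by assumption, and if $u \in SBD_{x,t}$ then the $BD$ trace theory supplies the bilateral traces of $u$. Invoking \cref{thm: no dissipation} then yields $|D[u]|(J_\rho \cup J_u) = 0$, which together with the concentration statement of the previous step forces $D[u] \equiv 0$.

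The main obstacle is the case $u \in L^1_t BV_x$, where no space-time regularity is assumed on $u$ and classical $BV$ trace theory does not apply directly. Here one must extract the required bilateral trace information from the structure of the Euler system itself: the continuity equation makes $(\rho, \rho u)$ a bounded divergence-free space-time vector field, and the momentum equation makes each $(\rho u_j, \, \rho u_j u + p \, e_j)$ a bounded space-time vector field with $L^1$ divergence. The associated normal traces, together with the assumed bilateral trace for $p$ and the no-vacuum bound \eqref{no-vacuum}, are expected to suffice to recover bilateral traces for $u$ in the sense required by \cref{thm: no dissipation}. Everything else is a direct combination of the two main theorems with the standard $SBV/SBD$ structure theory.
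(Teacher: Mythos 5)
Your overall architecture is exactly the paper's: apply \cref{thm: DR approximation sharp} to get that $D[u]$ is a finite measure dominated by the singular parts, use the $SBV_{x,t}$/$SBD_{x,t}$ hypotheses to concentrate $\abs{D[u]}$ on a countably $\mathcal{H}^d$-rectifiable space-time set, and then invoke \cref{thm: no dissipation} to conclude that this mass vanishes. Your slicing remark relating $\abs{\nabla^s\rho_t}\otimes \dd t$ to the space-time jump set of $\rho$ makes explicit a step the paper leaves implicit. One small imprecision: when $u\in SBD_{x,t}$ you should use part (iii) of \cref{thm: DR approximation sharp}, whose bound is by the space-time measure $\abs{E^s_{x,t}U}$ (already concentrated on the rectifiable jump set of $U=(u,1)$, no slicing needed), rather than part (ii) with $\abs{E^s u_t}\otimes\dd t$; as \cref{r: BD/SBD x-t} notes, $SBD_{x,t}$ does not imply $L^1_tBD_x$, so part (ii) is not available in that case.

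The genuine gap is your final step, the bilateral traces of $u$ when $u\in L^1_tBV_x$. You correctly identify that classical space-time trace theory does not apply here, but the workaround you sketch does not close. The distributional normal traces of $(\rho u,\rho)$ and of the momentum fluxes exist for free as $L^\infty$ functions on a hypersurface $\Sigma$, but the pointwise formulas expressing them in terms of one-sided limits $u^\pm$ --- which is what the proof of \cref{thm: no dissipation} actually uses, via \cref{c: composition trace} and \cref{t: distributional trace vs lebesgue trace}, e.g. in \eqref{eq: trace V} --- already presuppose that $u$ has bilateral Lebesgue traces in the sense of \cref{d: inner/outer trace}. So the argument as proposed is circular: you would be using the identification of normal traces with expressions in $u^\pm$ to prove that $u^\pm$ exist. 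Moreover, even granting the identification, the normal components alone would not determine the full vector traces $u^\pm$ that enter the energy flux. The paper resolves this step simply by appealing to \cref{T:trace_in_BV-BD} for $\rho$ and $u$; you either need to justify the existence of bilateral space-time traces of $u$ directly (which is where the real content of this case lies), or follow the paper's route.
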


The latter applies to the case of a system made by two immiscible incompressible inviscid fluids with $BV$ spatial regularity. We show that the dissipation is ruled out if the contact surface stays of space-time finite perimeter (see also \cref{r: final comments 2 fluids}). This is coherent with the Rayleigh--Taylor instability, where the instantaneous formation of tentacles and small vortices destroys completely the geometry of the interface (see \cite{gebhard2021new} and the reference therein). 

\begin{remark} \label{r: space-time BD BV}
We briefly comment on the $BD$ assumption. By interpolation and \cref{l: char of BV-BD}, \cref{L:Bd_gradient}, $u \in L^\infty \cap BD$ satisfies 
\begin{equation}
\norm{ y \cdot( u(\cdot + \ell y) - u(\cdot))}_{L^p} \sim \ell^{\frac{1}{p}} \qquad \forall \abs{y} =1, \, \ell >0 \, p \in [1, +\infty).  
\end{equation}
To the best of our knowledge, this is the first result available in the inhomogeneous case under a control on the longitudinal structure function of the velocity field. The space-time $SBD$ regularity for the velocity and $SBV$ for the density are needed for technical reasons. It would be natural to investigate the case of a velocity field in $L^1_t SBD_x$ and a density in $L^1_t SBV_x$. Thus, we would have that for any time slice $u_t, \rho_t$ jump on a countably $\mathcal{H}^{d-1}$-rectifiable set $\Sigma_t \subset \Omega$. However, in this setting, the time evolution of the sets $\Sigma_t$ could be completely wild. In some sense, the space-time assumptions allow to \quotes{glue} the space hypersurfaces $\Sigma_t$ in a space-time countably $\mathcal{H}^d$-rectifiable hypersurface $\Sigma$, preventing from a very rough time evolution of these sheets. We also point out that $BV_{x,t}, BD_{x,t}$ are functional classes where the space-time traces required by \cref{cor: no dissipation BV/BD} exist (see \cref{T:trace_in_BV-BD}). In the stationary case, the picture simplifies considerably. For instance, by \cref{cor: no dissipation BV/BD} we prove that $D[u]$ vanishes whenever the $\rho \in BV$ and $u \in BD$, provided that both the gradient of the density and the symmetric gradient of the velocity have no \quotes{Cantor} part (see \cref{ss: BV and BD} for precise definitions). 
\end{remark}

For completeness, we state the global energy balance for weak solutions of \eqref{IE} on bounded domains with Lipschitz boundary. In this case, the system \eqref{IE} is coupled with the \emph{no-flux boundary condition} 
\begin{equation} \label{eq: impermeability condition}
   \int_{\Omega} u_t(x) \cdot \nabla \phi(x) \dd x = 0 \qquad \forall \phi \in C^\infty_c(\R^d), \, \text{ for a.e. } t \in (0,T). 
\end{equation}
In the smooth setting, by the divergence theorem, \eqref{eq: impermeability condition} is equivalent to require that $u_t$ is divergence-free in $\Omega$ and it is tangential to $\partial \Omega$. The following result is similar to \cite{DRINV23}*{Theorem 1.3} in the homogeneous setting.  

\begin{corollary} \label{cor: global conservation}
Let $\Omega$ be a bounded open set with Lipschitz boundary and let $(u,\rho, p) \in L^\infty_{x,t}$ be a weak solution to \eqref{IE} in $\Omega \times (0,T)$ with force $f \in L^1_{x,t}$ according to \cref{d: weak solution}. Assume that 
\begin{enumerate}
    \item[(i)] the local energy balance \eqref{eq: local energy balance} holds with $D[u] \equiv 0$ in $\mathcal{D}'_{x,t}$; 
    \item[(ii)] for a.e. $t$ then $u_t$ has zero Lebesgue normal boundary trace at $\partial \Omega$ according to \cref{d: inner/outer trace}.  
\end{enumerate}
Then, the global energy balance holds, i.e. 
\begin{equation} \label{eq: global energy balance}
    \frac{d}{dt} \int_{\Omega} \frac{1}{2} \rho \abs{u}^2 \dd x = \int_{\Omega} f \cdot u \dd x \qquad \text{ in } \mathcal{D}'((0,T)). 
\end{equation}
\end{corollary}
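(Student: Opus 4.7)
The plan is to test the distributional local energy balance, which by hypothesis (i) holds with $D[u] \equiv 0$, against $\chi(t)\,\psi_n(x)$, where $\chi \in C^\infty_c((0,T))$ is arbitrary and $\psi_n \in C^\infty_c(\Omega)$ is a spatial cutoff approximating $\mathbf{1}_\Omega$. Concretely, I would take $\psi_n(x) = \eta(n\,\dist(x,\partial\Omega))$ (mollified to smoothness), where $\eta \in C^\infty(\R_+)$ is non-decreasing with $\eta(0)=0$ and $\eta \equiv 1$ on $[1,\infty)$. Then $0 \leq \psi_n \uparrow 1$ a.e.\ in $\Omega$ and $\nabla \psi_n$ is supported in the shrinking tubular layer $T_{1/n} := \{x \in \Omega : \dist(x,\partial\Omega) < 1/n\}$. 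Inserting $\varphi_n(x,t) = \chi(t)\psi_n(x)$ into \eqref{eq: local energy balance} yields the identity
\begin{equation*}
-\int_0^T \chi'(t) \int_\Omega \psi_n\,\rho\,\tfrac{|u|^2}{2}\,dx\,dt \;=\; I_n \;+\; \int_0^T \chi(t) \int_\Omega \psi_n\,f\cdot u\,dx\,dt,
\end{equation*}
with boundary flux contribution
\begin{equation*}
I_n := \int_0^T \chi(t) \int_\Omega \nabla\psi_n \cdot \left(\rho\,\tfrac{|u|^2}{2} + p\right) u \,dx\,dt.
\end{equation*}

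Since $|\psi_n| \leq 1$ and $\psi_n \to 1$ pointwise a.e.\ in $\Omega$, and since $\rho|u|^2/2 \in L^\infty_{x,t}$ while $f\cdot u \in L^1_{x,t}$, dominated convergence turns the two bulk terms into $-\int_0^T \chi'(t) E(t)\,dt$ and $\int_0^T \chi(t) F(t)\,dt$, with $E(t) := \int_\Omega \tfrac12\rho|u|^2\,dx$ and $F(t) := \int_\Omega f \cdot u\,dx$. Hence \eqref{eq: global energy balance} follows from the limit identity for $\chi$ arbitrary once one establishes $\lim_{n \to \infty} I_n = 0$.

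Showing $I_n \to 0$ is the main obstacle. The coarea formula applied to the Lipschitz function $\dist(\cdot,\partial\Omega)$, together with $\nabla\psi_n = n\eta'(n\,\dist)\nabla\dist$ and $\int_0^1 \eta'(\sigma)\,d\sigma = 1$, rewrites the inner spatial integral in $I_n$ as a weighted $s$-average of normal fluxes of the bounded vector field $hu$ (with $h := \rho|u|^2/2 + p \in L^\infty_{x,t}$) through the parallel hypersurfaces $\{\dist = s\}$, $s \in (0, 1/n)$. The subtlety is that $h$ has no regularity beyond $L^\infty$, so the product $hu$ does not carry a classical trace on $\partial\Omega$. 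However, $hu$ is a bounded field whose spatial distributional divergence equals $f\cdot u - \partial_t(\rho|u|^2/2)$ and is, in particular, a space-time Radon measure on $\Omega\times(0,T)$. I would therefore invoke the normal-trace machinery for $L^\infty$ divergence-measure vector fields developed in \cref{ss: traces} (notably \cref{d: inner/outer trace}) applied to $hu$ on a thin tubular neighborhood of $\partial\Omega$. Combined with the vanishing Lebesgue normal trace $u\cdot\nu = 0$ from hypothesis (ii) and the uniform boundedness of $h$, this forces the averaged normal flux of $hu$ through $\{\dist = s\}$ to tend to zero as $s \to 0^+$ in $L^1_t$, and dominated convergence in the $s$ and $t$ integrals then yields $I_n \to 0$.

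Thus the only delicate ingredient is the boundary-flux vanishing: the time-localization via $\chi$ and the passage in the bulk terms are routine, while the whole weight of the proof is carried by the fact that, although $hu$ itself does not admit a naive boundary trace, the Lebesgue normal vanishing of $u$ together with the paper's trace theory for bounded divergence-measure vector fields is exactly enough to neutralize the boundary-layer contribution.
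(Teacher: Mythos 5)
Your proposal follows essentially the same route as the paper: test the local balance \eqref{eq: local energy balance} with $\alpha(t)\chi_\e(x)$ for a cutoff $\chi_\e$ of the distance function, pass to the limit in the two bulk terms by dominated convergence, and kill the boundary-layer flux by transferring the zero Lebesgue normal trace of $u_t$ to $Z_t=\left(\rho_t\frac{|u_t|^2}{2}+p_t\right)u_t$ via boundedness of the scalar prefactor (the paper does this through \cref{c: composition trace} and then \cite{DRINV23}*{Proposition 2.3}, with Federer's Minkowski-content bound providing the $L^1_t$ domination in time). One inessential inaccuracy in your sketch: the spatial divergence of $hu$ is \emph{not} known to be a Radon measure (since $\partial_t(\rho|u|^2/2)$ is only a distributional time derivative of an $L^\infty$ function), but no divergence-measure trace machinery is needed here --- the Lebesgue-type trace of \cref{d: inner/outer trace} is defined by averages alone, which is precisely why it survives multiplication by a bounded scalar and carries the whole boundary argument.
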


We remark that $u_t$ has zero Lebesgue boundary trace if $u_t \in BD(\Omega)$ and $u_t$ satisfies \eqref{eq: impermeability condition} (see \cref{T:trace_in_BV-BD}).

\subsection{Plan of the paper} 
The paper is organized as follows. To keep this note self-contained, in \cref{s: tools} we collect some technical tools mostly from \cites{EG15, AFP00, DRINV23, CDRINVN24, DRINnew}, such as properties of curves of measures, $BV$ and $BD$ functions and different notion of traces. In \cref{s: proofs} we discuss in details the proofs of our results. 

\section{Tools} \label{s: tools}

\subsection{Time-dependent curves of measures} \label{ss: curves of measures}

Throughout this section, let $\Omega \subset \R^d$ be an open set. We denote by $\mathcal{M}(\Omega; \R^m)$ the space of finite Borel measures with values in $\R^m$. Given $\mu \in \mathcal{M}(\Omega; \R^m)$, we denote by $\abs{\mu} \in \mathcal{M}(\Omega)$ its variation, i.e. the non-negative finite Borel measure defined by 
\begin{equation} \label{eq: variation of a measure}
\langle \abs{\mu}, \varphi \rangle := \sup_{\psi \in C^0(\Omega; \R^m), \abs{\psi} \leq \varphi} \int_\Omega \psi \cdot \, d  \mu \qquad \forall \varphi \in C^0_c (\Omega), \, \varphi \geq 0.
\end{equation}
We then  set $\norm{\mu}_{\mathcal{M}(\Omega)}: = \abs{\mu}(\Omega)$. If the dimension $m$ of the target space is clear from the context, we simply write $\mathcal{M}(\Omega)$ instead of $\mathcal{M}(\Omega; \R^m)$. We will often use the shorthand notation $\mathcal{M}_x$.

\begin{definition} \label{d: curve of measures}
We say that $ \mu = \{\mu_t\}_{t \in (0,T)}$ is a weakly measurable curves of measures if for almost every $t\in I$ we have $\mu_t \in \mathcal{M}(\Omega)$ and for any test function $\phi \in C^0_c(\Omega)$ the map $t \mapsto \int_{\Omega} \phi(x) \dd \mu_t(x)$ is measurable. We say that $\mu \in L^1((0,T); \mathcal{M}(\Omega))$ if $\mu$ is weakly measurable and the map $t \mapsto \norm{\mu_t}_{\mathcal{M}_x}$ is in $L^1((0,T))$. We define the norm $\norm{\mu}_{L^1_t \mathcal{M}_x} : = \norm{\norm{\mu_t}_{\mathcal{M}(\Omega)}}_{L^1(0,T)}$.
\end{definition}

In the following lemmas, we show that any $\mu  \in L^1_t \mathcal{M}_x$ can be identified with a finite space-time Borel measure which admits a disintegration with respect to the Lebesgue measure on $(0,T)$. We also show that this identification holds at the level of the absolutely continuous part and the singular part.  

\begin{lemma} \label{l: curve of measure 1}
Let $\mu = \{ \mu_t\}_{t} \in L^1_t \mathcal{M}_x$. Then linear functional 
\begin{align} \label{eq: joint measure}
    \langle \mu_t \otimes \dd  t, \varphi\rangle : = \int_0^T \int_{\Omega} \varphi(x,t) \dd \mu_t(x) \dd  t  \qquad \forall \varphi \in C^0_c(\Omega \times (0,T))
 \end{align} 
is a finite Borel measure on $\Omega \times (0,T)$ and satisfies $(\pi_t)_{\#} \mu \ll \mathcal{L}^1 \llcorner (0,T)$, where $\pi_t: \Omega \times (0,T) \to (0,T)$ is the projection. Viceversa, let $\mu \in \mathcal{M}_{t,x}$ such that $(\pi_t)_{\#} \mu \ll \mathcal{L}^1 \llcorner (0,T)$. Then there exists $\{ \mu_t\}_{t} \in L^1_t \mathcal{M}_x$ such that $\mu = \mu_t \otimes \dd  t$. Moreover the family $\{ \mu_t\}_{t}$ is uniquely determined for a.e. $t$. 
\end{lemma}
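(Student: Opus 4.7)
The plan is to establish the two directions separately, with the forward direction amounting to a direct application of Riesz--Markov--Kakutani once the relevant integrand is seen to be measurable, and the reverse direction amounting to an application of the disintegration theorem.

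For the forward direction, I would first check that for every $\varphi \in C^0_c(\Omega \times (0,T))$, the map $t \mapsto \int_\Omega \varphi(x,t)\,d\mu_t(x)$ is Lebesgue measurable. This can be done by approximating $\varphi$ uniformly on the support by finite sums of the form $\sum_i \psi_i(t)\phi_i(x)$ with $\phi_i \in C^0_c(\Omega)$ and $\psi_i \in C^0_c((0,T))$ (via Stone--Weierstrass or a partition-of-unity argument), then invoking the weak measurability hypothesis for each summand and passing to the limit with the uniform bound $|\int \varphi(\cdot,t)\,d\mu_t| \leq \|\varphi\|_\infty \|\mu_t\|_{\mathcal{M}_x}$. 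Having established measurability, the formula
\[
\bigl|\langle \mu_t \otimes dt, \varphi\rangle\bigr| \leq \int_0^T \|\varphi(\cdot,t)\|_{C^0(\Omega)} \|\mu_t\|_{\mathcal{M}_x}\,dt \leq \|\varphi\|_{C^0} \|\mu\|_{L^1_t \mathcal{M}_x}
\]
shows that \eqref{eq: joint measure} defines a continuous linear functional on $C^0_c(\Omega \times (0,T))$ with norm controlled by $\|\mu\|_{L^1_t \mathcal{M}_x}$, hence by the Riesz representation theorem identifies with a finite Borel (vector) measure of total variation at most $\|\mu\|_{L^1_t \mathcal{M}_x}$. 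To see that $(\pi_t)_\# \mu \ll \mathcal{L}^1$, one tests $(\pi_t)_\# \mu$ against a time-only function $\psi \in C^0_c((0,T))$ by approximating $\psi$ via $\psi(t)\chi_K(x)$ with $\chi_K$ a cutoff supported in compacts exhausting $\Omega$; this gives $\int \psi\,d(\pi_t)_\# \mu = \int_0^T \psi(t)\mu_t(\Omega)\,dt$ modulo a tail controlled by $\int_0^T |\mu_t|(\Omega \setminus K)\,dt \to 0$, from which $d(\pi_t)_\# \mu = \mu_t(\Omega)\,dt$, manifestly absolutely continuous.

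For the converse, given $\mu \in \mathcal{M}(\Omega \times (0,T))$ with $(\pi_t)_\# \mu \ll \mathcal{L}^1$, I would apply the disintegration theorem (e.g.\ \cite{AFP00}*{Theorem 2.28}) to the variation $|\mu|$ with respect to the projection $\pi_t$, yielding a weakly measurable family $\{\sigma_t\}_t$ of probability measures on $\Omega$ and the identity $|\mu| = \sigma_t \otimes d(\pi_t)_\# |\mu|(t)$. Writing $g(t) = d(\pi_t)_\# |\mu| / dt \in L^1(0,T)$ and letting $\nu = d\mu/d|\mu|$ be the polar density, one sets $\mu_t := g(t)\, \nu(\cdot,t)\, \sigma_t$, which is weakly measurable in $t$, satisfies $\|\mu_t\|_{\mathcal{M}_x} \leq g(t)$ (hence lies in $L^1_t \mathcal{M}_x$), and reproduces $\mu = \mu_t \otimes dt$ by Fubini.

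Uniqueness of the family $\{\mu_t\}_t$ for a.e.\ $t$ follows from the uniqueness clause in the disintegration theorem: if $\mu_t^{(1)} \otimes dt = \mu_t^{(2)} \otimes dt$, then testing against $\psi(t)\phi(x)$ with $\phi$ ranging over a countable dense subset of $C^0_c(\Omega)$ and $\psi$ arbitrary in $C^0_c((0,T))$ gives $\int \phi\,d\mu_t^{(1)} = \int \phi\,d\mu_t^{(2)}$ for a.e.\ $t$ outside a countable union of null sets, and the density of the chosen subset forces $\mu_t^{(1)} = \mu_t^{(2)}$. The only subtle step I expect is the measurability verification in the forward direction — in particular, making the approximation of $\varphi$ by separable sums rigorous when $\mu$ is vector-valued — but this is routine once one reduces to compactly supported $\varphi$ and uses uniform approximation together with the dominated convergence theorem against the finite measure $\|\mu_t\|_{\mathcal{M}_x}\,dt$.
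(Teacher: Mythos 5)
Your proposal is correct and follows essentially the same route as the paper's proof: measurability of $t \mapsto \int_\Omega \varphi(x,t)\,\dd\mu_t(x)$ via uniform approximation by separable functions (the paper uses sums $\sum_i \phi_i(x)\mathds{1}_{[t_{i-1},t_i)}(t)$ where you use continuous tensor products), the bound $\abs{\langle \mu_t\otimes \dd t,\varphi\rangle}\le \norm{\varphi}_{C^0}\norm{\mu}_{L^1_t\mathcal{M}_x}$ together with the Riesz representation theorem, and the disintegration theorem of \cite{AFP00} for the converse and the a.e.\ uniqueness. Your explicit computation $\dd(\pi_t)_\#\mu=\mu_t(\Omega)\,\dd t$ supplies a verification the paper leaves implicit, and your unpacking of the disintegration through the polar decomposition of $\mu$ is just a hands-on version of the same citation.
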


\begin{proof}
We check that for $\phi \in C^0_c(\Omega \times (0,T))$, the map $t \mapsto \int_\Omega \phi(x,t) \dd \mu_t(x) $ is measurable. By \cref{d: curve of measures}, it is immediate to see that this is the case whenever $\phi(x,t) = \sum_{i=1}^n \phi_i(x) \mathds{1}_{[t_{i-1}, t_i)}(t)$, where $\phi_1, \dots, \phi_n \in C^0_c(\Omega)$ and $t_0 < t_1 < \dots < t_n$ is a partition of $(0,T)$. Since such functions approximate uniformly any function in $C^0_c(\Omega \times (0,T))$, for any $\phi \in C^0_c(\Omega \times (0,T))$ the map $t\mapsto \int_{\Omega} \phi(x,t) \dd \mu_t(x)$ can be written as the pointwise limit of a sequence of measurable functions, and it is therefore measurable. Then, we estimate
\begin{equation}
    \abs{\langle \mu_t \otimes \dd t, \phi \rangle} \leq \norm{\phi}_{C^0} \int_0^T \abs{\mu_t}(\Omega) \dd  t = \norm{\phi}_{C^0} \norm{\mu}_{L^1_t \mathcal{M}_x} 
\end{equation}
and we conclude that $\mu_t \otimes \dd t \in \mathcal{M}_{x,t}$. Viceversa, given $\mu \in \mathcal{M}_{x,t}$ such that $(\pi_t)_{\#} \mu \ll \mathcal{L}^1 \llcorner (0,T)$, by the disintegration theorem (see \cite{AFP00}*{Theorem 2.28}) we find $\{ \mu_t \}_{t} \in L^1_t \mathcal{M}_x$ such that $\mu = \, \mu_t \otimes \dd t$. The family $\{\mu_t\}_t$ is uniquely determined at almost every $t$. 
\end{proof}

\begin{lemma} \label{l: curve of measure 2}
Let $\mu = \{ \mu_t\}_{t} \in L^1_t\mathcal{M}_x$. Then $\abs{\mu} = \{ \abs{\mu_t}\}_{t}$ is in $L^1_t\mathcal{M}_x$ and $\norm{\abs{\mu}}_{L^1_t \mathcal{M}_x} = \norm{\mu}_{L^1_t \mathcal{M}_x}$. For any $t$, let $\mu_t = \mu_t^a + \mu_t^s$ be the Radon--Nikodym decomposition of $\mu_t$ with respect to $\mathcal{L}^d$, i.e. $\mu^a_t \ll \mathcal{L}^d$ and $\mu^s_t \perp \mathcal{L}^d$. Then $\{\mu^{a,s}_t\}_{t } \in L^1_t\mathcal{M}_x $ and, letting $\mu = \mu^a + \mu^s$ be the Radon--Nikodym decomposition of $\mu$ with respect to $\mathcal{L}^{d+1}$, it holds that $\mu^{a,s} = \mu^{a,s}_t \otimes \dd  t$ and $\norm{\mu^{a,s}}_{L^1_t\mathcal{M}_x} \leq \norm{\mu}_{L^1_t\mathcal{M}_x}$. Moreover, there exists $f \in L^1_{x,t}$ such that $f(x,t) \mathcal L^d = \mu^a_t$ for a.e. $t$. 
\end{lemma}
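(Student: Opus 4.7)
The argument splits into three stages, corresponding to the three claims of the lemma.

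\emph{Step 1: $\abs{\mu} \in L^1_t\mathcal{M}_x$ and norm equality.} Since $C^0_c(\Omega; \R^m)$ is separable in the uniform topology, for every $\varphi \in C^0_c(\Omega)$ with $\varphi \geq 0$ the duality \eqref{eq: variation of a measure} can be restricted to a countable dense family $\mathcal{F}_\varphi \subset \{\psi \in C^0_c(\Omega; \R^m) : \abs{\psi} \leq \varphi\}$, giving
\[ \int_\Omega \varphi \dd \abs{\mu_t} = \sup_{\psi \in \mathcal{F}_\varphi} \int_\Omega \psi \cdot \dd \mu_t. \]
The right-hand side is a countable supremum of measurable functions of $t$, hence $\{\abs{\mu_t}\}_t$ is weakly measurable. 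Taking $\varphi \nearrow 1$ on $\Omega$ via monotone convergence yields the measurability of $t \mapsto \abs{\mu_t}(\Omega) = \norm{\mu_t}_{\mathcal{M}_x}$, and the norm equality $\norm{\abs{\mu}}_{L^1_t \mathcal{M}_x} = \norm{\mu}_{L^1_t \mathcal{M}_x}$ is immediate.

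\emph{Step 2: fiberwise disintegrations of $\mu^a$ and $\mu^s$.} By \cref{l: curve of measure 1}, the joint measure $\mu = \mu_t \otimes \dd t$ is a finite Borel measure on $\Omega \times (0,T)$ whose $t$-projection is absolutely continuous with respect to $\mathcal{L}^1$. Mutual singularity gives $\abs{\mu^a} + \abs{\mu^s} = \abs{\mu}$, so $(\pi_t)_\# \abs{\mu^{a,s}} \ll \mathcal{L}^1$ as well, and a second application of \cref{l: curve of measure 1} produces curves $\tilde{\mu}^{a,s} \in L^1_t \mathcal{M}_x$ with $\mu^{a,s} = \tilde{\mu}^{a,s}_t \otimes \dd t$. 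To identify these curves fiberwise, the Radon--Nikodym theorem gives $\mu^a = g \, \mathcal{L}^{d+1}$ with $g \in L^1(\Omega \times (0,T))$; Fubini yields $g(\cdot, t) \in L^1(\Omega)$ for a.e.\ $t$, and testing against arbitrary $\phi \in C^0_c(\Omega \times (0,T))$ combined with the uniqueness of the disintegration forces $\tilde{\mu}^a_t = g(\cdot, t) \mathcal{L}^d$ for a.e.\ $t$. In particular $\tilde{\mu}^a_t \ll \mathcal{L}^d$. For $\mu^s$, pick a Borel $E \subset \Omega \times (0,T)$ with $\mathcal{L}^{d+1}(E) = 0$ and $\abs{\mu^s}(E^c) = 0$; Fubini gives $\mathcal{L}^d(E_t) = 0$ for a.e.\ $t$, where $E_t := \{x : (x,t) \in E\}$, while the disintegration identity
\[ 0 = \abs{\mu^s}(E^c) = \int_0^T \abs{\tilde{\mu}^s_t}(\Omega \setminus E_t) \dd t \]
forces $\tilde{\mu}^s_t$ to concentrate on $E_t$ for a.e.\ $t$, hence $\tilde{\mu}^s_t \perp \mathcal{L}^d$.

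\emph{Step 3: identification and norm bound.} For a.e.\ $t$, the decomposition $\mu_t = \tilde{\mu}^a_t + \tilde{\mu}^s_t$ satisfies $\tilde{\mu}^a_t \ll \mathcal{L}^d$ and $\tilde{\mu}^s_t \perp \mathcal{L}^d$, so uniqueness of the Lebesgue decomposition of $\mu_t$ forces $\tilde{\mu}^{a,s}_t = \mu^{a,s}_t$ for a.e.\ $t$. Setting $f := g$ yields the required $L^1_{x,t}$ density with $f(x,t)\mathcal{L}^d = \mu^a_t$ for a.e.\ $t$. The norm inequality follows at once from mutual singularity of $\mu^a_t$ and $\mu^s_t$: $\abs{\mu^{a,s}_t}(\Omega) \leq \abs{\mu_t}(\Omega)$ a.e., and integration in $t$ yields $\norm{\mu^{a,s}}_{L^1_t \mathcal{M}_x} \leq \norm{\mu}_{L^1_t \mathcal{M}_x}$.

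The delicate point is Step 2, where one must juggle several "a.e.\ in $t$" statements while passing between the global Radon--Nikodym decomposition of $\mu$ on $\Omega \times (0,T)$ and its fiberwise counterpart; the key is to invoke Fubini for the absolutely continuous part and a section-wise application of Fubini combined with the disintegration identity for the singular part, aggregating the null $t$-sets only at the very end.
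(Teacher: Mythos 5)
Your proposal is correct and follows essentially the same route as the paper: disintegrate via \cref{l: curve of measure 1}, use Fubini on an $\mathcal{L}^{d+1}$-null carrier of $\mu^s$ to get fiberwise singularity, and conclude by uniqueness of the disintegration and of the fiberwise Lebesgue decomposition. The only organizational difference is that the paper first reduces to nonnegative $\mu_t$ via the Jordan decomposition and defines $(\mu^s)_t := \mu_t - f(\cdot,t)\mathcal{L}^d$ directly, whereas you disintegrate $\mu^a$ and $\mu^s$ separately and then match the two candidate decompositions; both are sound.
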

 
\begin{proof}
We start by checking that $\{\abs{\mu_t}\}_t$ is weakly measurable. Then, the equality of norms follows immediately. Indeed, for any $t$ and for any $\varphi \in C^0_c(\Omega)$ non-negative we have that 
\begin{equation}
    \langle \abs{\mu_t}, \varphi \rangle = \sup_{\psi \in C^0_c(O), \abs{\psi} \leq \varphi } \int_{\Omega} \psi \dd  \mu_t.   
\end{equation}
Since $C^0_c(\Omega)$ is separable, the supremum can be taken over a countable dense set thus proving that $t \mapsto \langle \abs{\mu_t}, \varphi \rangle$ is measurable. For the second part of the statement, denoting by $\mu_t^{\pm}$ the positive and negative part of $\mu_t$ respectively, we deduce that both $\{ \mu_t^{\pm}\}_{t\in I}$ are in $L^1_t\mathcal{M}_x$ since 
$$\mu_t^\pm = \frac{ \abs{\mu_t} \pm \mu_t }{2}.$$ 
Therefore, without loss of generality, we can assume that $\mu_t$ is nonnegative for any $t$. Since $\mu = \mu_t \otimes \dd  t \in \mathcal{M}_{t,x}$ by \cref{l: curve of measure 1}, the Radon--Nikodym theorem implies $\mu = \mu^a + \mu^s = f(x,t) \mathcal{L}^{d+1} + \mu^s$, 
where $f \in L^1_{x,t}$ and $\mu^s$ is concentrated on a Borel set $A \subset \Omega \times (0,T)$ such that $\mathcal{L}^{d+1}(A) = 0$. Since $\mu$ is non-negative, both $\mu^{a,s}$ are non-negative. By Fubini's theorem we write 
\begin{equation} \label{eq: disintegration of singular part}
    \mu^s = \left( \mu_t - f(x,t) \mathcal{L}^d \right) \otimes \dd  t =: (\mu^s)_t \otimes \dd  t. 
\end{equation}
For any $t$ we set $A_t := A \cap \{ z=t\}$ and we notice that $\mathcal{L}^d(A_t) =0 $ for a.e. $t$. Let us prove that $(\mu^s)_t$ is concentrated on $A_t$ for a.e. $t$, i.e. $(\mu^s)_t((A_t)^c) = 0$ for a.e. $t$. By \eqref{eq: disintegration of singular part} we have that $\{(\mu^s)_t\}_t$ is a disintegration of $\mu^s$ with respect to $\mathcal{L}^1 \llcorner (0,T)$. Since $\mu^s$ is non-negative, by the uniqueness of the disintegration (see \cite{AFP00}*{Theorem 2.28}) we infer that $(\mu^s)_t$ is non-negative for a.e. $t$. Therefore, by \eqref{eq: disintegration of singular part} we infer that 
$$0= \mu^s(A^c) = \int_0^T (\mu^s)_t((A_t)^c) \dd t, $$
yielding to $(\mu^s)_t((A_t)^c) = 0$ for a.e. $t$. To summarize, we have written $\mu_t \otimes d t = \mu = (f(x,t) \mathcal{L}^d + (\mu^s)_t) \otimes d t$. Then, the uniqueness of the disintegration of measures implies $\mu_t = f(x,t) \mathcal{L}^d + (\mu^s)_t$ for a.e $t$.
Recalling that the decomposition of $\mu_t$ by Radon--Nikodym with respect to Lebesgue measure is unique and since we checked that $(\mu^s)_t \perp \mathcal{L}^d$, we infer that 
$$\mu_t^a = f(x,t) \mathcal{L}^d \qquad \text{and}  \qquad \mu^s_t = (\mu^s)_t \qquad \text{ for a.e. } t.$$
In particular, $\{\mu^a_t\}_{t}$ and $\{\mu^s_t\}_{t}$ are weakly measurable in the sense of \cref{d: curve of measures}. The inequalities of norms $\norm{\mu^{a,s}}_{L^1_t\mathcal{M}_x} \leq \norm{\mu}_{L^1_t\mathcal{M}_x}$ follow immediately.
\end{proof}

\subsection{Bounded variation, bounded deformation and difference quotients} \label{ss: BV and BD}

We denote by $BV(\Omega)$ the space of functions with bounded variations, that is functions $f$ whose distributional gradient is a finite Borel measure on $\Omega$, i.e.
$$ BV(\Omega):=\left\{f\in L^1(\Omega)\,:\, \nabla f\in \mathcal{M}(\Omega;\R^d) \right\}. $$
The latter definition generalizes to vector fields $f:\Omega\rightarrow \R^d$. We refer to the monograph \cite{AFP00} for a extensive description of the properties of $BV$ functions. The space of functions with bounded deformation is given by vector fields whose symmetric gradient is a finite Borel measure, i.e.
$$ BD(\Omega;\R^d):=\left\{f\in L^1(\Omega;\R^d)\,:\, E f := \frac{\nabla f+\nabla f^T}{2}\in \mathcal{M}(\Omega;\R^{d\times d}) \right\}. $$
We refer to \cites{T83, ACDalM97} for a detailed description of $BD$ vector fields. We say that $f \in BV_{\rm loc}(\Omega)$ ($f \in BD_{\rm loc}(\Omega)$) if $f \in BV(O)$ ($f \in BD(O)$, respectively) for any open set $O \subset \joinrel \subset \Omega$. Functions of bounded variations and vector fields with bounded deformation are characterized by suitable control of the difference quotients in $L^1$. More precisely, given $f: \Omega \to \R^m$ we denote by 
\begin{align}\label{not:Incr}
    \delta_{y} f (x) := f(x+ y) - f(x ) \qquad \abs{y}< \dist(x, \Omega^c).  
\end{align}
For the reader convenience, we state and prove the following lemma. 

\begin{lemma} \label{l: char of BV-BD} 
Let $\Omega \subset \R^d$ be an open set and $f \in L^1(\Omega; \R^m)$ such that for any open set $A \subset \joinrel \subset \Omega$ it holds 
\begin{equation} \label{eq: char BV}
    \sup_{\e < \dist(A, \Omega^c)} \norm{ \e^{-1}\delta_{\e y} f}_{L^1(A)} < +\infty \qquad \forall y \in B_1.  
\end{equation}
Then, $f \in BV_{\rm loc}(\Omega; \R^m)$. Similarly, assume that $f \in L^1(\Omega; \R^d)$ and for any open set $A \subset \joinrel \subset \Omega$ it holds  
\begin{equation} \label{eq: char BD}
    \sup_{\e < \dist(A, \Omega^c)} \norm{ \e^{-1} y \cdot \delta_{\e y} f}_{L^1(A)} < +\infty \qquad \forall y \in B_1. 
\end{equation}
Then, $f \in BD_{\rm loc}(\Omega; \R^d)$. 
\end{lemma}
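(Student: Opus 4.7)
The plan is to derive both statements from the standard duality characterization of Radon measures: a distribution $T$ on an open set $A$ is represented by a finite Radon measure if and only if $|\langle T, \phi\rangle| \leq C\|\phi\|_{C^0(A)}$ for every $\phi \in C^\infty_c(A)$. I would aim to verify exactly this bound for every component of $\nabla f$ (respectively $Ef$) on an arbitrary $A \subset\joinrel\subset \Omega$, so that the Riesz representation theorem produces the desired measures and the inclusion in $BV_{\rm loc}$ (respectively $BD_{\rm loc}$) follows.

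For the BV statement, fix $A \subset\joinrel\subset \Omega$ and $\phi \in C^\infty_c(A)$. The classical idea is to approximate $\partial_j \phi$ by the discrete difference quotient $\e^{-1}(\phi(\cdot + \e e_j) - \phi(\cdot))$ and then move this difference onto $f$ through a translation change of variables: for $\e$ small enough,
\[
\int_\Omega f(x)\, \frac{\phi(x+\e e_j) - \phi(x)}{\e}\dd x = -\int_\Omega \phi(x)\,\frac{\delta_{-\e e_j} f(x)}{\e}\dd x .
\]
As $\e \to 0$ the left-hand side converges by dominated convergence to $-\langle \partial_j f, \phi\rangle$, while the right-hand side is bounded in absolute value by $\|\phi\|_{C^0}\sup_\e \|\e^{-1}\delta_{-\e e_j} f\|_{L^1(A)}$. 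Hypothesis \eqref{eq: char BV} supplies the required uniform bound once we rescale $y = -(1-\delta)e_j \in B_1$ for a small $\delta>0$ and absorb the factor $(1-\delta)^{-1}$. Each partial derivative $\partial_j f$ is therefore a finite Radon measure on $A$, and $f \in BV_{\rm loc}(\Omega;\R^m)$ follows by letting $A$ exhaust $\Omega$.

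For the BD statement I would mimic the same procedure, but applied to the scalar distribution $y \cdot (Ef\, y) = \sum_{i,j} y_i y_j \partial_j f^i$, using that the antisymmetric part of $\nabla f$ drops out because $y \cdot (Ay) = 0$ for antisymmetric $A$. Testing the same translation identity against the scalar function $y \cdot f$ gives
\[
-\int_\Omega \phi(x)\,\frac{y \cdot \delta_{-\e y} f(x)}{\e}\dd x \xrightarrow{\e \to 0} \bigl\langle y \cdot (Ef\, y),\,\phi \bigr\rangle ,
\]
with the left-hand side uniformly bounded by $\|\phi\|_{C^0}$ times the quantity controlled in \eqref{eq: char BD}. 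Hence every scalar $y \cdot (Ef\,y)$ is a finite Radon measure on $A$. To recover the full matrix $Ef$ I would invoke the polarization identity for symmetric forms, $(Ef)_{ii} = e_i \cdot (Ef\, e_i)$ and, for $i \neq j$,
\[
(Ef)_{ij} = \tfrac14\bigl[(e_i+e_j)\cdot(Ef\,(e_i+e_j)) - (e_i-e_j)\cdot(Ef\,(e_i-e_j))\bigr],
\]
so that, after the same $(1-\delta)$-rescaling into $B_1$, each entry of $Ef$ is a finite linear combination of Radon measures on $A$, giving $f \in BD_{\rm loc}(\Omega;\R^d)$.

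The main obstacle is conceptual rather than technical: hypothesis \eqref{eq: char BD} controls only longitudinal increments and therefore provides no direct information on any individual entry of $\nabla f$. The polarization step — which is available precisely because contraction with $y\otimes y$ annihilates the antisymmetric part of $\nabla f$ — is what converts directional scalar control into full measure control of $Ef$. The remaining ingredients (the translation identity, dominated convergence, and the Riesz representation theorem) are standard.
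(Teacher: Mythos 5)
Your proposal is correct, and the $BV$ half is essentially identical to the paper's argument: discrete integration by parts onto the test function, the uniform $L^1$ bound from \eqref{eq: char BV}, and Riesz representation. For the $BD$ half the two arguments rest on the same idea --- recovering the symmetric gradient from purely longitudinal information via polarization --- but execute it at different stages. The paper works entirely at the discrete level: it writes $\langle \phi, \partial_i f_j + \partial_j f_i\rangle$ as a limit of difference quotients and then uses the telescoping identity
\begin{equation}
    \delta_{-\e e_i} f_j(x) + \delta_{-\e e_j} f_i(x) = \delta_{-\e (e_i+e_j)} f(x) \cdot (e_i+ e_j) - \delta_{-\e e_i} f(x - \e e_j) \cdot e_i - \delta_{-\e e_j} f(x - \e e_i) \cdot e_j
\end{equation}
to bound the symmetric increment by three longitudinal increments before passing to the limit. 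You instead pass to the limit first, obtaining for each fixed $y$ that the scalar distribution $y\cdot(Ef\,y)$ satisfies the $C^0$ duality bound (hence is a measure), and only then polarize the resulting quadratic form to extract the individual entries $(Ef)_{ij}$. Both are sound; your version cleanly isolates the point that contraction with $y\otimes y$ kills the antisymmetric part, while the paper's discrete identity avoids having to speak of the quadratic form $y\mapsto y\cdot(Ef\,y)$ as an object in its own right. Your attention to rescaling $e_i\pm e_j$ into $B_1$ (the quadratic form being $2$-homogeneous in $y$) is a detail the paper glosses over.
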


\begin{proof}
Assume that \eqref{eq: char BV} holds. Then, for any test function $\phi \in C^\infty_c(\Omega)$ we write 
\begin{align}
    \abs{\langle \phi, \partial_i f_j \rangle} & = \abs{ \int_{\Omega} \partial_i \phi(x) f_j(x) \dd x } = \abs{ \lim_{\e \to 0} \int_{\Omega} \frac{\delta_{\e e_i} \phi(x)}{\e} f_j(x) \dd x } = \abs{ \lim_{\e \to 0}  \int_\Omega\frac{\delta_{-\e e_i} f(x)}{\e} \phi(x) \dd x } \leq C \norm{\phi}_{C^0(\Omega)}. 
\end{align}
Thus, $\partial_i f_j \in \mathcal{M}_{\rm loc}(\Omega)$ by Riesz theorem. Similarly, if \eqref{eq: char BD} holds, for any test function $\phi \in C^\infty_c(\Omega)$ we write 
\begin{align}
    \abs{\langle \phi, \partial_i f_j + \partial_j f_i \rangle} & = \abs{ \int_{\Omega} [\partial_i \phi(x) f_j(x) + \partial_j \phi(x) f_i(x) ]  \dd x } = \abs{ \lim_{\e \to 0} \int_{\Omega}  \frac{\delta_{-\e e_i} f_j(x) + \delta_{-\e e_j} f_i(x)}{\e} \phi(x) \dd x }. 
\end{align}
After some trivial computations, we see 
\begin{equation}
    \delta_{-\e e_i} f_j(x) + \delta_{-\e e_j} f_i(x) = \delta_{-\e (e_i+e_j)} f(x) \cdot (e_i+ e_j) - \delta_{-\e e_i} f(x - \e e_j) \cdot e_i - \delta_{-\e e_j} f(x - \e e_i) \cdot e_j. 
\end{equation}
Therefore, if \eqref{eq: char BD} holds, we conclude 
\begin{equation}
    \abs{\langle \phi, \partial_if_j + \partial_j f_i \rangle} \leq C \norm{\phi}_{C^0(\Omega)}. 
\end{equation}
Thus, $(E f)_{i,j} \in \mathcal{M}_{\rm loc}(\Omega)$ by Riesz theorem. 
\end{proof}

We recall that the difference quotients of $BV$ or $BD$ functions can be canonically decomposed (see \cite{DRINV23}*{Lemma $2.2$}, \cite{Ambr04}*{Theorem $2.4$},\cite{ACM05}*{Lemma 2.4, Proposition 2.5}). 

\begin{lemma}[$BV$ different quotients]\label{L:Bv_gradient}
Let $\Omega \subset \R^d$ be an open set and $f\in BV(\Omega;\R^m)$. We have  
\begin{equation}\label{BV_est_increment}
\norm{\delta_{\eps y}f}_{L^1(A)}\leq  \e \abs{ \nabla f \cdot y} \left( \overline{(A)_\eps}\right) \qquad \forall A \subset \joinrel \subset \Omega \text{ Borel} \quad \forall \e < \dist(A, \Omega^c) \qquad \forall y \in B_1,
\end{equation}
where $\nabla f \cdot z$ is the $m$-dimensional measure, whose $m$-th component is given by $(\nabla f \cdot z )_m:=z_i \partial_i f_m$. Moreover the function $\delta_{\eps y}f$ can be decomposed in $\delta^a_{\eps y}f+\delta^s_{\eps y}f$ where for every $K \subset \Omega$ compact we have
\begin{align}\label{eq:DQstrongconv}
    \lim_{\e \to 0} \norm{\e^{-1} \delta^a_{\eps y} f  -\nabla^a f \cdot y }_{L^1(K)} = 0 \qquad \forall y \in B_1, 
\end{align}
\begin{align}\label{eq:DQweakb}
    \limsup_{\eps \to 0} \norm{\e^{-1} \delta^s_{\eps y}f}_{L^1(K)} \leq \abs{y} \abs{\nabla^s f}(K) \qquad \forall y \in B_1,
\end{align} 
\begin{equation} \label{eq: DQuniform bound}
    \norm{\delta^a_{\e y} f}_{L^1(K)} + \norm{\delta^s_{\e y}f}_{L^1(K)} \leq \e \abs{y} \abs{\nabla f}\left(\overline{(K)_{\e}}\right) \quad \forall y \in B_1 \qquad \forall \e < \dist(K; \Omega^c). 
\end{equation}
\end{lemma}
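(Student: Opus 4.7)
I would first prove \eqref{BV_est_increment} by mollification. Let $\rho_\delta$ be a standard symmetric mollifier and $f_\delta := f * \rho_\delta$, well-defined on a neighbourhood of $\overline{(A)_\e}$ for $\delta$ small. Since $f_\delta$ is smooth, the fundamental theorem of calculus together with Minkowski's inequality give
\begin{equation*}
\norm{\delta_{\e y} f_\delta}_{L^1(A)} \leq \e \int_0^1 \abs{(\nabla f \cdot y)*\rho_\delta}(A+s\e y)\dd s \leq \e \abs{\nabla f \cdot y}\left( (A)_{\e+\delta}\right),
\end{equation*}
using that $\norm{\mu * \rho_\delta}_{L^1(E)} \leq \abs{\mu}((E)_\delta)$ for any finite measure $\mu$. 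Letting $\delta \to 0^+$, $\delta_{\e y} f_\delta \to \delta_{\e y} f$ in $L^1(A)$ while $(A)_{\e+\delta} \searrow \overline{(A)_\e}$, so the monotone continuity of the measure yields \eqref{BV_est_increment}.

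For the decomposition, by treating each component of $f$ separately I may assume $f$ is scalar-valued, and by rotation and linearity in $y$ it suffices to take $y = e_1$ with $\abs{y}=1$. The slicing theorem for $BV$ functions \cite{AFP00}*{Theorem 3.103} ensures that, for $\mathcal{L}^{d-1}$-a.e.\ $x' \in \R^{d-1}$, the slice $f_{x'}(t) := f(te_1+x')$ belongs to $BV_{\rm loc}(\R)$ and that the a.c./singular decomposition commutes with slicing: $(Df_{x'})^a = \partial_1^a f(\cdot,x')\,\mathcal{L}^1$ and $\abs{\partial_1^s f}(B) = \int_{\R^{d-1}} \abs{D^s f_{x'}}(B_{x'}) \dd x'$ for Borel $B \subset \R^d$. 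Setting
\begin{equation*}
\delta^a_{\e e_1}f(x) := \int_0^\e \partial_1^a f(x+s e_1) \dd s, \qquad \delta^s_{\e e_1} f := \delta_{\e e_1}f - \delta^a_{\e e_1}f,
\end{equation*}
the one-dimensional fundamental theorem of calculus applied to the precise representative of $f_{x'}$ yields $\delta^s_{\e e_1} f(x) = D^s f_{x'}((t,t+\e])$ at a.e.\ $x=(t,x')$.

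The convergence \eqref{eq:DQstrongconv} then follows from $L^1$-continuity of translations applied to $\partial_1^a f \in L^1_{\rm loc}$. For \eqref{eq:DQweakb}, Fubini and swapping the inner $t$-integration with the variation integral give $\int_{K_{x'}} \abs{D^s f_{x'}((t,t+\e])} \dd t \leq \e \abs{D^s f_{x'}}(K_{x'} + [0,\e])$; integrating in $x'$ and using the slicing identity above yields $\norm{\delta^s_{\e e_1}f}_{L^1(K)} \leq \e \abs{\partial_1^s f}(\overline{(K)_\e})$, so \eqref{eq:DQweakb} follows after dividing by $\e$, taking $\limsup$, and invoking outer regularity of $\abs{\partial_1^s f}$ on the compact set $K$. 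The uniform bound \eqref{eq: DQuniform bound} combines this with the analogous estimate $\norm{\delta^a_{\e e_1}f}_{L^1(K)} \leq \e \abs{\nabla^a f\cdot e_1}((K)_\e)$. The chief technical subtlety is the pointwise identification of $\delta^s_{\e e_1} f$ with the slice measure $D^s f_{x'}((t,t+\e])$, which requires fixing the precise representative of $f_{x'}$; the countably many jump points along each line do not affect the $L^1$-norm computations.
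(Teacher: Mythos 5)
The paper does not prove \cref{L:Bv_gradient}: it is recalled from the literature, with the proof deferred to \cite{DRINV23}*{Lemma 2.2}, \cite{Ambr04}*{Theorem 2.4} and \cite{ACM05}. Your argument is a correct self-contained reconstruction along essentially the same lines as those references: mollification plus continuity from above of $\abs{\nabla f\cdot y}$ on $(A)_{\e+\delta}\downarrow \overline{(A)_\e}$ for \eqref{BV_est_increment}, and the one-dimensional slicing theorem (with the a.c./singular decomposition commuting with slicing and the identity $\tilde f_{x'}(t+\e)-\tilde f_{x'}(t)=Df_{x'}((t,t+\e])$ for the precise representative) to define $\delta^{a}_{\e y}f$ and $\delta^{s}_{\e y}f$ and derive \eqref{eq:DQstrongconv}--\eqref{eq: DQuniform bound}. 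The only blemish is the preliminary reduction to scalar $f$: summing componentwise bounds recovers \eqref{eq:DQweakb} and \eqref{eq: DQuniform bound} only up to a dimensional constant depending on the chosen matrix norm, so you should instead apply the slicing theorem directly to the $\R^m$-valued $f$ (which \cite{AFP00}*{Theorem 3.103 ff.} permits); with that harmless modification the proof is complete.
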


\begin{lemma}[$BD$ different quotients]\label{L:Bd_gradient}
Let $\Omega \subset \R^d$ be an open set and $f\in BD(\Omega;\R^d)$. 
The function $\delta_{\eps y}f$ can be decomposed in $\delta^a_{\eps y}f+\delta^s_{\eps y}f$ where for every $K \subset \Omega$ compact we have
\begin{align}\label{eq:DQstrongconv in BD}
   \lim_{\e \to 0} \norm{\e^{-1} y \cdot \delta^a_{\eps y}f  -E^a f : y \otimes y }_{L^1(K)} = 0 \qquad \forall y \in B_1, 
\end{align}
\begin{align}\label{eq:DQweakb in BD}
    \limsup_{\eps \to 0} \norm{\e^{-1} y \cdot \delta^s_{\eps y}f}_{L^1(K)} \leq \abs{y}^2 \abs{E^s f}(K) \qquad \forall y \in B_1,
\end{align} 
\begin{equation} \label{eq: DQuniform bound in BD}
     \norm{y \cdot \delta^a_{\e y} f}_{L^1(K)} + \norm{y \cdot \delta^s_{\e y}f}_{L^1(K)} \leq  \e \abs{y}^2 \abs{E f}\left(\overline{(K)_{\e}}\right) \qquad \forall y \in B_1 \quad \forall \e < \dist(A, \Omega^c). 
\end{equation}
\end{lemma}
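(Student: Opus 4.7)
The plan is to mirror the proof of \cref{L:Bv_gradient}, adapting the argument to the fact that for a $BD$ vector field only the longitudinal component of the increment is directly controlled by the symmetric gradient.

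First I would define the decomposition explicitly. Let $\tilde{E}^a f \in L^1_{\rm loc}(\Omega; \R^{d\times d}_{\rm sym})$ denote the density of the absolutely continuous measure $E^a f$ with respect to $\mathcal{L}^d$, and set
\begin{equation*}
\delta^a_{\e y} f(x) := \int_0^\e \tilde{E}^a f(x + s y) \cdot y \dd s, \qquad \delta^s_{\e y} f := \delta_{\e y} f - \delta^a_{\e y} f.
\end{equation*}
By Fubini's theorem $\delta^a_{\e y} f \in L^1_{\rm loc}(\Omega; \R^d)$, and the symmetry of $\tilde{E}^a f$ yields $y \cdot \delta^a_{\e y} f(x) = \int_0^\e \tilde{E}^a f(x+sy) : y \otimes y \dd s$. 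The strong convergence \eqref{eq:DQstrongconv in BD} then follows from continuity of translations in $L^1$:
\begin{equation*}
\norm{\e^{-1} y \cdot \delta^a_{\e y} f - \tilde{E}^a f : y \otimes y}_{L^1(K)} \leq \e^{-1} \int_0^\e \norm{\tilde{E}^a f(\cdot + sy) - \tilde{E}^a f}_{L^1(K)} \dd s \xrightarrow{\e \to 0} 0.
\end{equation*}

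For the singular bound \eqref{eq:DQweakb in BD} and the uniform bound \eqref{eq: DQuniform bound in BD}, the essential ingredient is the estimate
\begin{equation*}
\int_K \abs{y \cdot \delta_{\e y} f(x)} \dd x \leq \e \abs{y}^2 \abs{E f}\bigl(\overline{(K)_\e}\bigr),
\end{equation*}
which I would obtain by mollification at scale $\alpha$: for $f^\alpha$ smooth, the fundamental theorem of calculus applied to $t \mapsto y \cdot f^\alpha(x + ty)$ gives the pointwise identity $y \cdot \delta_{\e y} f^\alpha(x) = \int_0^\e E f^\alpha(x + sy) : y \otimes y \dd s$. Since $E f^\alpha$ is the convolution of $Ef$ with the mollifier and hence $\abs{E f^\alpha}(A) \leq \abs{Ef}\bigl(\overline{(A)_\alpha}\bigr)$, Fubini's theorem yields the estimate for $f^\alpha$, and passing to the limit $\alpha \to 0$ transfers it to $f$. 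Splitting $Ef = E^a f + E^s f$ and subtracting the absolutely continuous contribution produces the analogous bound with $\abs{E^s f}$ in place of $\abs{E f}$, so that \eqref{eq:DQweakb in BD} follows after dividing by $\e$ and using outer regularity of $\abs{E^s f}$ on compact sets. The uniform bound \eqref{eq: DQuniform bound in BD} then results from the triangle inequality together with $\int_K \abs{\tilde{E}^a f} \dd x \leq \abs{E^a f}(K)$.

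The main obstacle is the rigorous justification of the identity $y \cdot \delta_{\e y} f(x) = \int_0^\e E f(x+sy) : y \otimes y \dd s$ for $f \in BD$: the right-hand side must be interpreted distributionally as a pushed-forward Radon measure tested against functions of $x$, so one has to argue by mollification and pass to the limit in duality against continuous test functions, exploiting the weak-$*$ convergence $E f^\alpha \rightharpoonup Ef$ and carefully handling the enlargement of supports from $K$ to $\overline{(K)_\e}$.
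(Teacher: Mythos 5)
Your argument is correct and is essentially the standard proof of this decomposition: the paper itself does not prove the lemma but cites \cite{ACM05}*{Lemma 2.4, Proposition 2.5}, whose argument is exactly the one you reconstruct — define the absolutely continuous part of the longitudinal increment by integrating the density $\tilde{E}^a f$ along the segment $[x,x+\e y]$, obtain the key bound $\norm{y\cdot\delta_{\e y}f}_{L^1(K)}\le \e\abs{y}^2\abs{Ef}(\overline{(K)_\e})$ from the identity $y\cdot\delta_{\e y}f^\alpha(x)=\int_0^\e Ef^\alpha(x+sy):y\otimes y\dd s$ for mollifications (valid because $y\otimes y$ is symmetric, which is the only reason the full gradient is not needed), and pass to the limit using weak-$*$ convergence of $Ef^\alpha$ and continuity from above of $\abs{E^sf}$ on the shrinking neighborhoods $\overline{(K)_\e}\downarrow K$. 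The minor points you flag (interpreting the line integral of the measure distributionally, the support enlargement from $K$ to $\overline{(K)_\e}$) are handled exactly by the mollification scheme you describe, so there is no gap.
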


The gradient of functions with bounded variations and the symmetric gradient of vector fields with bounded deformation satisfy strong structural properties. We recall some notation from \cite{AFP00}*{Chapter 3.6}. Given an open set $\Omega \subset \R^d$ and a function $f: \Omega \to \R^m$, we denote by $\mathcal{S}_f$ the \emph{approximate discontinuity set}, i.e. 
\begin{equation}
    \Omega \setminus \mathcal{S}_f : = \left\{ x \in \Omega \colon \exists \Tilde{f}(x) \in \R^m \text{ s.t. } \lim_{r \to 0} \fint_{B_r(x)} \abs{f(y) - \Tilde{f}(x)} \dd y = 0 \right\}.  
\end{equation} 
For any $x \in \Omega \setminus \mathcal{S}_f$, the value of $\Tilde{f}(x)$ is uniquely determined and it is known as the approximate limit of $f$ at $x$. We denote by $\mathcal{J}_f \subset \mathcal{S}_f$ the \emph{jump set} of $f$, i.e. 
\begin{align} \label{eq: jump set}
    \mathcal{J}_f : =  
        \left\{  x \in \mathcal{S}_f \colon  \exists \nu(x) \in \mathbb{S}^{d-1}, f^+(x) \neq f^-(x) \in \R^m \text{ s.t. } \lim_{r \to 0} \fint_{B_r^{\nu(x)_\pm}(x)} \abs{f(y) - f^\pm(x)} \dd y = 0 \right\},  
\end{align}
where we define  
$$B_r^{\nu_\pm}(x) : =  \{ y \in B_r(x) \colon \langle y-x, \pm \nu \rangle \geq 0 \}. $$ We say that $f^\pm$ are the \emph{one-sided Lebesgue limits} of $f$ at $x$ with respect to $\nu(x)$. For any $x \in \mathcal{J}_f$ the triple $(\nu(x), f^+(x), f^-(x))$ is uniquely determined, up to change of sign of $\nu(x)$ and permutation of $f^+(x)$ with $f^-(x)$. In particular, the tensor product $(f^+(x) - f^-(x))\otimes \nu(x)$ is well defined for $\mathcal{H}^{d-1}$-a.e. $x \in \mathcal{J}_f$. 

To define an orientation on a countably $\mathcal{H}^d$-rectifiable set $\Sigma$, choose pairwise disjoint Borel sets $E_i$ and oriented hypersurfaces $\Gamma_i$ such that $E_i \subset \Gamma_i$ and $\bigcup_{i \in \N} \Gamma_i$ covers $\mathcal{H}^{d-1}$-almost all of $\Sigma$. Then, define 
$$n_\Sigma := n_{\Gamma_i} \text{ on } E_i.$$
This definition depends on the choice of the decomposition, but only up to a sign, since for any pair of Lipschitz hypersurfaces $\Gamma, \Gamma'$ it holds $n_{\Gamma'} = \pm n_{\Gamma}$ for $\mathcal{H}^{d-1}$-a.e. $x \in \Gamma \cap \Gamma'$. 

We recall some properties of the gradient of $BV$ functions (see \cite{AFP00}*{Chapter 3.9}). The same result holds for the symmetric gradient of $BD$ vector fields (see \cite{ACDalM97}*{Remark 4.2, Theorem 4.3, Proposition 4.4}).

\begin{theorem} \label{thm structure of the gradient BV/BD}
Given $f \in BV(\Omega)$ or $f \in BD(\Omega)$, then the jump set $\mathcal{J}_f$ defined by \eqref{eq: jump set} is countably $\mathcal{H}^{d-1}$-rectifiable (oriented by a unit normal). If $f \in BV(\Omega)$, let $\nabla f = \nabla^a f + \nabla^s f$ be the Radon--Nikodym decomposition of $\nabla f$ with respect to the Lebesgue measure. Then, $\nabla^s f$ admits the following decomposition in \quotes{jump part} and \quotes{Cantor part} 
$$ \nabla^s f = \nabla^j f + \nabla ^c f, $$
where $\nabla^j f := \nabla^s f\llcorner \mathcal{J}_f$ and $\nabla^c f$ vanishes on $\mathcal H^{d-1}$-finite sets. Similarly, if $f \in BD(\Omega)$, let $E f = E^a f + E^s f$ be the Radon--Nikodym decomposition of $E f$ with respect to the Lebesgue measure. Then $E^s f$ admits the following decomposition in \quotes{jump part} and \quotes{Cantor part} 
$$ E^s f = E^j f + E^c f, $$
where $E^j f := E^s f\llcorner \mathcal{J}_f$ and $E^c f$ vanishes on $\mathcal H^{d-1}$-finite sets. 
\end{theorem}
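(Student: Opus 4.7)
The result is classical and the paper cites \cite{AFP00}*{Chapter 3.9} for the $BV$ part and \cite{ACDalM97} for the $BD$ part; I would reconstruct those proofs in three stages, handling the $BV$ case first and then importing the $BV$ structure into the $BD$ setting via slicing.

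For $f\in BV(\Omega)$, I would begin by showing that the jump set $\mathcal{J}_f$ defined in \eqref{eq: jump set} is countably $\mathcal{H}^{d-1}$-rectifiable. The idea is to first prove $\mathcal{H}^{d-1}(\mathcal{S}_f\setminus \mathcal{J}_f)=0$, so that approximate discontinuities are actual jumps outside a $(d-1)$-negligible set. This uses a blow-up argument: at a point $x\in \mathcal{S}_f$ with $|\nabla f|(B_r(x))\leq C r^{d-1}$, the rescaled functions $f(x+r\cdot)$ are equibounded in $BV(B_1)$ and any limit is one-dimensional, which forces $x$ to be a jump point. Rectifiability of $\mathcal{J}_f$ then follows from the existence of the one-sided normal $\nu(x)$ at every point, which serves as an approximate tangent direction, via Federer's characterization of rectifiable sets.

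Next, I would define $\nabla^j f:=\nabla^s f\llcorner \mathcal{J}_f$ and $\nabla^c f:=\nabla^s f - \nabla^j f$. To identify $\nabla^j f$, I would blow up at a jump point and use compactness in $BV$ to deduce
\begin{equation*}
\nabla^j f = (f^+-f^-)\otimes \nu \,\mathcal{H}^{d-1}\llcorner \mathcal{J}_f,
\end{equation*}
which in particular shows $\nabla^j f\ll \mathcal{H}^{d-1}$. For the Cantor part, the claim that $\nabla^c f$ vanishes on $\mathcal{H}^{d-1}$-finite sets would follow from a standard density/covering argument: if $E\subset \Omega\setminus \mathcal{J}_f$ has $\mathcal{H}^{d-1}(E)<+\infty$, then $\nabla^s f$ has zero upper $(d-1)$-density at $\mathcal{H}^{d-1}$-a.e.\ point of $E$ (otherwise $x$ would be a jump point by the blow-up analysis), so a Vitali-type covering gives $|\nabla^c f|(E)=0$.

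For $f\in BD(\Omega)$, the direct blow-up approach is obstructed because $Ef$ only controls longitudinal increments, so rescaled limits need not be one-dimensional. Here I would follow the slicing strategy of \cite{ACDalM97}: for every direction $\xi\in\mathbb{S}^{d-1}$, the one-dimensional slices $t\mapsto f(x+t\xi)\cdot\xi$ belong to $BV$ for $\mathcal{H}^{d-1}$-a.e.\ $x$ on $\xi^\perp$, and their distributional derivatives form a Fubini disintegration of $Ef:\xi\otimes\xi$. Applying the one-dimensional $BV$ decomposition slicewise and integrating over $\xi^\perp$ would produce the jump-vs-Cantor splitting of the scalar measure $Ef:\xi\otimes\xi$; the jump part is concentrated on a set which, varying $\xi$ over a countable dense family, one identifies with $\mathcal{J}_f$ (after the rectifiability of $\mathcal{J}_f$ has been established by an analogous but more technical blow-up, exploiting Korn--Poincaré inequalities in place of Poincaré's inequality). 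The Cantor part inherits the vanishing on $\mathcal{H}^{d-1}$-finite sets directly from the $BV$ case of each slice, via the standard slicing bounds for Hausdorff measures.

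The main obstacle is the $BD$ step: unlike in $BV$, the symmetric gradient does not control the full gradient, so one cannot directly transplant the $BV$ blow-up analysis. One has to prove that the slicewise decomposition is consistent across all directions $\xi$ and actually reassembles into a tensor-valued decomposition of $Ef$, which is the deepest input from \cite{ACDalM97} and relies crucially on Korn-type rigidity to upgrade one-dimensional structure information to the full measure $Ef$.
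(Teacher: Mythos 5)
The paper does not prove this statement at all: it is recalled as a known result, with the $BV$ part attributed to \cite{AFP00}*{Chapter 3.9} and the $BD$ part to \cite{ACDalM97}*{Remark 4.2, Theorem 4.3, Proposition 4.4}. Your proposal is therefore not in competition with an argument in the paper; it is a reconstruction of the cited literature, and as such it is broadly faithful. Two remarks on accuracy. First, for the rectifiability of $\mathcal{J}_f$ in the $BV$ case, the route actually taken in \cite{AFP00} is not the blow-up-plus-Federer-characterization you describe: one writes $\mathcal{S}_f$ (up to an $\mathcal{H}^{d-1}$-negligible set) as a countable union of essential boundaries $\partial^*\{f>t\}$ of superlevel sets via the coarea formula, and invokes the De Giorgi--Federer rectifiability theorem for sets of finite perimeter; the blow-up argument you sketch is instead the tool used to prove $\mathcal{H}^{d-1}(\mathcal{S}_f\setminus\mathcal{J}_f)=0$ and to identify the jump part as $(f^+-f^-)\otimes\nu\,\mathcal{H}^{d-1}\llcorner\mathcal{J}_f$. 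Merely having a unit vector $\nu(x)$ attached to each point of a set does not by itself yield rectifiability, so as written that step is a gap, though a repairable one given the coarea route. Second, for $BD$ your description of the slicing strategy of \cite{ACDalM97} (one-dimensional sections $t\mapsto f(x+t\xi)\cdot\xi$ in $BV$, disintegration of $Ef:\xi\otimes\xi$, reassembly over a countable dense set of directions, Korn--Poincar\'e rigidity) correctly identifies the key mechanisms and the genuine difficulty, namely that $Ef$ does not control the full gradient; this is indeed the deepest input and cannot be obtained by transplanting the $BV$ blow-up. Since the paper only needs the statement as a black box, citing the references as it does is the economical choice; your sketch adds no error to the paper but should not be mistaken for a self-contained proof, in particular of the $BD$ rectifiability claim.
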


We define special functions of bounded variation/deformation as follows.

\begin{definition} \label{d: SBV - SBD}
Let $\Omega \subset \R^d$ be an open set and $f \in BV(\Omega; \R^m)$. We say that $f$ is a special function of bounded variation ($f\in SBV(\Omega))$ if $\nabla^c f = 0$, i.e. $\nabla^s f = \nabla^j f$. Similarly, given $f \in BD(\Omega; \R^d)$, we say that $f$ is a special function of bounded deformation if $E^c f = 0$, i.e. $E^s f = E^j f$.  
\end{definition}

We recall that functions of bounded variation/deformation have a trace on Lipschitz hypersurfaces. We refer to \cite{AFP00}*{Theorem 3.77} for the $BV$ case and \cite{T83}*{Chapter 2}, \cite{ACDalM97}*{Section 3} for the $BD$ case.  

\begin{theorem}[Boundary trace]\label{T:trace_in_BV-BD}
Let $\Omega \subset \R^d$ be an open set and let $f \in BV(\Omega; \R^m)$ or $f \in BD(\Omega, \R^d)$. Then, for any countably $\mathcal{H}^{d-1}$-rectifiable set $\Sigma \subset \Omega$ oriented by $n_\Sigma$, then $f$ has one-sided Lebesgue limits $f^{\Sigma_\pm}$ on both sides of $\Sigma$ (with respect to $n_\Sigma$). Moreover, if $f \in BV(\Omega; \R^m)$ it holds 
$$\nabla f\llcorner \Sigma  = \left(f^{\Sigma_+} - f^{\Sigma_-}\right) \otimes n_{\Sigma} \mathcal{H}^{d-1} \llcorner \Sigma. $$
If $f \in BD(\Omega; \R^d)$ it holds 
$$ (E f) \llcorner \Sigma =  \frac{1}{2} \left[ \left(f^{\Sigma_+} - f^{\Sigma_-} \right) \otimes n_\Sigma + n_\Sigma \otimes \left(f^{\Sigma_+}- f^{\Sigma_-} \right) \right] \mathcal{H}^{d-1} \llcorner \Sigma. $$
\end{theorem}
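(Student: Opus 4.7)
The plan is to reduce to the model case where $\Sigma$ is flat, establish the existence of one-sided Lebesgue traces by slicing, and finally derive the jump identities by integration by parts on each side of $\Sigma$. Both the existence of $f^{\Sigma_\pm}$ and the claimed formulas are local properties and countably additive in $\Sigma$, so using the countable $\mathcal{H}^{d-1}$-rectifiability I would first write $\Sigma = \mathcal{N}\cup\bigcup_i \Sigma_i$ with $\mathcal{H}^{d-1}(\mathcal{N})=0$ and each $\Sigma_i$ contained in a Lipschitz graph oriented compatibly with $n_\Sigma$, and then reduce, on a single $\Sigma_i$, via a bi-Lipschitz flattening to the case $\Sigma = \{x_d=0\}\cap U$ for some open set $U$.

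For the one-sided Lebesgue limits in the $BV$ case, I would invoke the Fubini-type slicing theorem: the restriction of $f$ to $\mathcal{L}^{d-1}$-a.e.\ vertical line $\{x'\}\times\R$ is a one-dimensional $BV$ function, hence admits classical pointwise right and left limits $f^\pm(x',0)$ at every point. A standard Fubini/dominated-convergence argument then upgrades these pointwise limits into one-sided Lebesgue limits over half-balls, giving $f^{\Sigma_\pm}$. For the jump identity, I would test $\nabla f$ against a smooth compactly supported vector field $\varphi$ supported near $\Sigma$, split the support into the two half-neighborhoods $\Omega_\pm$ determined by $n_\Sigma$, and integrate by parts in each half using that $f\in BV(\Omega_\pm)$ up to the boundary $\Sigma$. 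The interior terms reproduce $\nabla f\llcorner\Omega_\pm$, while the boundary contributions combine precisely to $\int_\Sigma \varphi \cdot ((f^{\Sigma_+}-f^{\Sigma_-})\otimes n_\Sigma)\,d\mathcal{H}^{d-1}$, which is the stated formula.

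In the $BD$ case the slicing strategy above fails because only the longitudinal structure function $y\cdot \delta_{\e y}f$ is controlled by \cref{L:Bd_gradient}, so 1D slicing cannot recover the tangential components of the trace. I would instead invoke the Temam trace theorem \cite{T83}*{Chapter 2}, refined in \cite{ACDalM97}*{Section 3}, which constructs the one-sided Lebesgue trace of a $BD$ vector field on any Lipschitz hypersurface by mollification combined with an integration-by-parts duality against $Ef$. Once the traces are available, the same two-sided integration-by-parts on $\Omega_\pm$ gives the boundary contribution $(f^{\Sigma_+}-f^{\Sigma_-})\otimes n_\Sigma$; since $Ef$ is symmetric, this must be symmetrized, yielding $\tfrac12[(f^{\Sigma_+}-f^{\Sigma_-})\otimes n_\Sigma + n_\Sigma\otimes(f^{\Sigma_+}-f^{\Sigma_-})]\mathcal{H}^{d-1}\llcorner\Sigma$.

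The main obstacle is the $BD$ case. Lacking control of the full gradient, the existence of two-sided Lebesgue traces and the identification of $Ef\llcorner\Sigma$ are not direct consequences of slicing; they rely on the sharp structure theorem of Ambrosio--Coscia--Dal Maso, which identifies $E^j f$ precisely on the countably $\mathcal{H}^{d-1}$-rectifiable jump set $\mathcal{J}_f$, together with the fact that on any Lipschitz hypersurface $\Sigma$ whose normal does not coincide $\mathcal{H}^{d-1}$-a.e.\ with a direction of jump, $E^s f\llcorner\Sigma$ coincides with the boundary contribution predicted by the two traces. Combining these two pieces is what extends the formula from $\Sigma\cap\mathcal{J}_f$ to the arbitrary oriented rectifiable $\Sigma$ of the theorem, and represents the genuinely nontrivial content beyond the $BV$ argument.
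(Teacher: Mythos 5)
The paper gives no proof of this theorem at all --- it is imported from the literature, with the surrounding text pointing to Ambrosio--Fusco--Pallara (Theorem 3.77) for the $BV$ case and to Temam and Ambrosio--Coscia--Dal Maso for the $BD$ case --- and your sketch reconstructs precisely those standard arguments, correctly locating the genuinely nontrivial content in the $BD$ trace theorem and the structure of $E^s f$ on the jump set, and deferring to the same references the paper cites. The one soft spot is presentational rather than substantive: passing from a.e.-line limits to one-sided Lebesgue limits over half-balls requires a Besicovitch differentiation/maximal-function argument (as in the cited sources), not merely Fubini and dominated convergence, but this is standard and does not affect correctness.
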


\subsection{Time-dependent functions of bounded variation/deformation}

We use the notation from \cref{ss: curves of measures} to define time dependent functions of bounded variation/deformation. 

\begin{definition}
Let  $\Omega \subset \R^d$ be an open set. We say that $f \in L^1((0,T); BV(\Omega))$ if $ \in L^1_{x,t}$, $f_t \in BV(\Omega)$ for a.e. $t \in (0,T)$ and $\{\nabla f_t\}_{t \in (0,T)} \in L^1_t \mathcal{M}_x$. Similarly, we say that $f \in L^1((0,T); BD(\Omega))$ if $f \in L^1_{x,t}$, $f_t \in BD(\Omega)$ for a.e. $t \in (0,T)$ and $\{ E f_t\}_{t \in (0,T)} \in L^1_t \mathcal{M}_x$\footnote{Given $f \in L^1_{x,t}$ such that $\nabla f_t \in \mathcal{M}_x$ for a.e. $t$, the family $\{\nabla f_t\}_{t}$ is always weakly measurable according to \cref{d: curve of measures} by Fubini's theorem. Here, we only require that $\norm{\nabla f_t}_{\mathcal{M}_x} \in L^1_t$. The same holds for the symmetric gradient. }.
\end{definition}

Thanks to \cref{L:Bv_gradient} we can prove a time dependent decomposition result for $\div(\rho u)$, under the assumptions that $\nabla_x \rho$ is a space-time measure and $u$ is a bounded divergence-free vector field. 

\begin{lemma}\label{L:divdec}
Let $u \in L^\infty_{x,t}, \rho \in L^1_{x,t}$. Assume that $\div_x u_t = 0$ in $\mathcal{D}'(\Omega)$ for almost every $t$ and $\nabla \rho := (\partial_1 \rho, \dots, \partial_d \rho) \in \mathcal{M}_{x,t}$. Denote by $\nabla \rho = \nabla^a \rho + \nabla^s \rho$ be the Radon--Nikodym decomposition of $\nabla \rho$ with respect to $\mathcal{L}^{d+1}$. Then $\div(\rho u) \in \mathcal{M}_{x,t}$ and it holds     
\begin{align}
    \div(\rho u) = (u \cdot \nabla^a \rho) + \mu
\end{align}
with $\mu \perp \mathcal{L}^{d+1}$ and 
\begin{align} \label{eq: sing part divergence}
    \abs{\mu}  \leq \norm{u}_{L^\infty_{x,t}}  \abs{\nabla^s \rho} \qquad \text{as measures in $\Omega \times (0,T)$}. 
\end{align} 
\end{lemma}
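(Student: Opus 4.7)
The plan is to regularize $u$ and exploit the divergence-free constraint to express $\div(\rho u_\varepsilon)$ as the pairing of a continuous function against the measure $\nabla\rho$. Let $u_\varepsilon := u \ast \eta_\varepsilon$ denote a standard space-time mollification (extending $\rho,u$ trivially off $\Omega\times(0,T)$), so that $u_\varepsilon\in C^\infty_{x,t}$, $\|u_\varepsilon\|_{L^\infty_{x,t}}\leq \|u\|_{L^\infty_{x,t}}$, and $\div u_\varepsilon \equiv 0$ pointwise, since $\div_x u_t=0$ for almost every $t$ and convolution commutes with differentiation.

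For smooth $u_\varepsilon$ and $\varphi\in C_c^\infty(\Omega\times(0,T))$, the identity $u_\varepsilon\cdot\nabla\varphi = \div(u_\varepsilon\varphi)$ gives
\begin{equation}
    \langle \div(\rho u_\varepsilon), \varphi \rangle = -\int \rho\, \div(u_\varepsilon\varphi)\,\dd x\,\dd t = \int \varphi\, u_\varepsilon\cdot \dd(\nabla \rho),
\end{equation}
so that $\div(\rho u_\varepsilon) = u_\varepsilon\cdot \nabla\rho$ as Borel measures. Writing the Radon--Nikodym decomposition $\nabla\rho = h\,\mathcal{L}^{d+1} + \nabla^s\rho$ with $h\in L^1_{x,t}$ then splits this measure into an absolutely continuous and a singular summand.

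I next pass $\varepsilon\to 0$. On the left, $\rho u_\varepsilon \to \rho u$ in $L^1_{x,t}$ by dominated convergence ($u_\varepsilon \to u$ a.e. on the Lebesgue set of $u$, with $|u_\varepsilon|\leq \|u\|_{L^\infty_{x,t}}$ and $\rho\in L^1$), so $\div(\rho u_\varepsilon)\to \div(\rho u)$ in $\mathcal{D}'_{x,t}$; the same argument gives $u_\varepsilon h \to u h$ in $L^1_{x,t}$, which is precisely $u\cdot \nabla^a\rho$ viewed as an absolutely continuous measure. Subtracting, $u_\varepsilon\cdot\nabla^s\rho$ converges in $\mathcal{D}'_{x,t}$ to the distribution $\mu := \div(\rho u) - u\cdot\nabla^a\rho$, producing the claimed decomposition $\div(\rho u) = u\cdot\nabla^a\rho + \mu$.

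Finally, to upgrade $\mu$ to a measure with the stated variation bound (which forces $\mu\perp\mathcal{L}^{d+1}$, since $|\nabla^s\rho|\perp\mathcal{L}^{d+1}$), I invoke lower semicontinuity: for any $\varphi\in C_c(\Omega\times(0,T))$ with $|\varphi|\leq 1$, the pointwise estimate
\begin{equation}
    \Bigl|\int \varphi\, u_\varepsilon\cdot \dd(\nabla^s\rho)\Bigr|\leq \|u\|_{L^\infty_{x,t}}\int |\varphi|\,\dd|\nabla^s\rho|
\end{equation}
passes to the limit, and taking the supremum over such $\varphi$ subordinate to a nonnegative test function in \eqref{eq: variation of a measure} produces $|\mu|\leq \|u\|_{L^\infty_{x,t}}|\nabla^s\rho|$. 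The main subtle point is that neither $u\cdot\nabla^s\rho$ nor the product $u\cdot\nabla\rho$ has any a priori meaning when $u$ is merely $L^\infty$ and $\nabla\rho$ is a measure; it is the divergence-free hypothesis, applied at the smooth level, that renders $\div(\rho u_\varepsilon) = u_\varepsilon\cdot\nabla\rho$ and thereby makes the limiting decomposition well-defined with control purely by $|\nabla^s\rho|$.
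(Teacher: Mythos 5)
Your proof is correct, but it regularizes the opposite factor from the paper: you mollify the velocity and pair the continuous field $u_\e$ against the measure $\nabla\rho$, whereas the paper mollifies the density and pairs the $L^1$ function $(\nabla\rho)_\e$ against the bounded field $u$, writing $\div(\rho_\e u)=(\nabla^a\rho)_\e\cdot u+(\nabla^s\rho)_\e\cdot u$ and extracting $\mu$ as a weak-$*$ limit of the second term along a subsequence. The two routes are dual and both hinge on the same point you correctly identify as the crux, namely that incompressibility kills the term $\rho\,\div u_\e$ (resp. $\rho_\e\,\div u$) so that no commutator estimate is needed. Your version has two small advantages: $u_\e\cdot\nabla^s\rho$ is already a genuine measure for each $\e$ and the full sequence converges in $\mathcal{D}'_{x,t}$ to the explicitly defined distribution $\div(\rho u)-u\cdot\nabla^a\rho$, so no compactness or subsequence extraction is required; and the bound $\abs{\smallint\varphi\,u_\e\cdot\dd(\nabla^s\rho)}\le\norm{u}_{L^\infty}\smallint\abs{\varphi}\dd\abs{\nabla^s\rho}$ is exact at every $\e$, whereas the paper's estimate $\limsup_\e\smallint_{\supp\varphi}\abs{(\nabla^s\rho)_\e}\lesssim\abs{\nabla^s\rho}(\supp\varphi)$ needs the mollification-of-measures inequality plus outer regularity. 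One cosmetic point: with the zero extension of $u$ off $\Omega\times(0,T)$, the claim $\div_x u_\e\equiv 0$ fails in an $\e$-neighbourhood of the boundary; this is harmless because for each fixed $\varphi\in C^\infty_c(\Omega\times(0,T))$ the identity $\langle\div(\rho u_\e),\varphi\rangle=\int\varphi\,u_\e\cdot\dd(\nabla\rho)$ only requires $\div_x u_\e=0$ on $\supp\varphi$, which holds once $\e<\dist(\supp\varphi,(\Omega\times(0,T))^c)$, but you should phrase it that way rather than asserting the divergence-free property globally for the extended field.
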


\begin{proof}
Denote by $\rho_\eps$ the space-time mollification of $\rho$. Since $\rho_\e$ is smooth, we write
\begin{align}
    \div (\rho_{\eps} u) = (\nabla \rho)_\eps \cdot u = (\nabla^a \rho)_\eps \cdot u + (\nabla^s \rho)_\eps \cdot u.
\end{align}
For the absolutely continuous part, we know that $(\nabla^a \rho)_\eps \to \nabla^a \rho$ strongly in $L^1_{x,t}$. Since \( u \in L^\infty_{t,x} \) we infer that $ (\nabla^a \rho)_\eps \cdot u \to \nabla^a \rho \cdot u$ in $L^1_{x,t}$. By the properties of convolutions, we have that $\{(\nabla^s \rho)_\e\}_\e$ is bounded in $L^1_{x,t}$. Therefore, up to subsequence, we may assume that $(\nabla^s \rho)_\e \cdot u \rightharpoonup \mu$ weakly for some locally finite Borel measure $\mu$. Recalling that $\div (\rho_{\eps} u) \rightharpoonup \div (\rho u )$ weakly in $\mathcal D'_{x,t}$, we conclude that $\div(\rho u) = (\nabla^a \rho)\cdot u + \mu$. It remains to check \eqref{eq: sing part divergence}. For any test function $\phi \in C^\infty_c(\Omega \times (0,T))$ we have
\begin{align}
    \abs{\langle \mu, \varphi \rangle} 
    &= \abs{\lim_{\eps \to 0} \int_{\Omega \times (0,T)} (\nabla^s \rho)_\eps \cdot u \, \varphi \, \mathrm{d}x \, \mathrm{d}t} \lesssim \norm{\varphi}_{C^0_{t,x}} \limsup_{\eps \to 0} \int_{\supp (\varphi)} \abs{(\nabla^s \rho)_\eps} \, \mathrm{d}x \, \mathrm{d}t \lesssim \norm{\varphi}_{C^0_{t,x}} \abs{\nabla^s \rho} (\supp \varphi), 
\end{align}
thus proving \eqref{eq: sing part divergence}. 
\end{proof}

In the case of time dependent vector fields, it is convenient to introduce the following notion. 

\begin{definition} \label{d: BD x-t}
Given $f: \Omega \times (0,T) \to \R^d$, we say that $f \in BD_{x,t}$ ($f \in SBD_{x,t}$) if $F:= (f,1) \in BD_{x,t}$ ($F \in SBD_{x,t}$). 
\end{definition}

\begin{remark} \label{r: BD/SBD x-t}
We recall 
\begin{equation}
    E_{x,t} F = \left(
    \begin{matrix}
    E f & \frac{\partial_t f}{2}
    \\ \frac{(\partial_t f)^{T}}{2} & 0
    \end{matrix} \right), \qquad \text{where } (E f)_{i,j} = \frac{\partial_i f_j + \partial_j f_i}{2} \qquad \text{ for } i,j = 1, \dots, d.  
\end{equation}
We remark the difference between \cref{d: SBV - SBD} and \cref{d: BD x-t}. Indeed, given $f \in L^1_t BD_x$ we require that $f_t \in BD_x$ for a.e. $t$ and $t \mapsto \norm{E f_t}_{\mathcal{M}_x} \in L^1_t$. Thus, by \cref{l: curve of measure 1} the symmetric spatial gradient can be identified with a space-time measure with Lebesgue time marginal. On the other side, given $f \in BD_{x,t}$, we require that the spatial symmetric gradient and the time derivative are Radon measures, without any constraint on the time marginal. 
\end{remark}

\subsection{Measure-Divergence vector fields and traces}\label{ss: traces}

We recall the notion of distributional normal trace \cites{ACM05,Shv09,CCT19,CTZ09}. Let $\Omega \subset \R^d$ be an open set. We say that a vector field $V$ is in $\mathcal{MD}^\infty(\Omega)$ if $ V \in L^\infty(\Omega;\R^d)$ and $\div V \in \mathcal{M}(\Omega)$. For such vector fields, it is possible to define the outer normal trace on $\partial \Omega$ coherently with the Gauss--Green formula, i.e. 
\begin{equation} \label{eq: normal distributional trace}
    \langle \tr_n(V, \partial \Omega), \varphi \rangle := \int_{\Omega} \nabla \varphi \cdot V \dd x + \int_{\Omega} \varphi \dd (\div V) \qquad \forall\varphi \in C^\infty_c(\R^d).
\end{equation}
The formula above defines a distribution supported on $\partial \Omega$. We will always adopt the convention that the boundary of an open sets is oriented by the outer unit normal, denoted by $n_{\partial \Omega}$. If $V \in C^1(\overline{ \Omega};\R^d)$ and $\Omega$ has Lipschitz boundary, then $\tr_n(V, \partial \Omega)$ is induced by the integration on $\partial \Omega$ of $V \cdot n_{\partial \Omega}$. If $V \in \mathcal{MD}^\infty(\Omega)$ and $\Omega$ has Lipschitz boundary, then it turns out that $\tr_n(V, \partial \Omega)$ is induced by an $L^\infty$ function on $\partial \Omega$, still denoted by $\tr_n(V, \partial \Omega)$ (see e.g. \cite{ACM05}*{Proposition 3.2}). Moreover, this notion of trace is local in the sense that for any Borel set $\Sigma \subset \partial \Omega_1\cap \partial \Omega_2$ such that $n_{\partial \Omega_1}(x) = n_{\partial \Omega_2}(x)$ for $\mathcal{H}^{d-1}$-a.e. $x \in \Sigma$, where $\Omega_1, \Omega_2$ are Lipschitz open sets contained in $\Omega$, we have that the two traces coincide, i.e. $\tr_n(V, \partial \Omega_1) = \tr_n(V, \partial \Omega_2)$ for $\mathcal{H}^{d-1}$-a.e. $x \in \Sigma$ (see \cite{ACM05}*{Proposition 3.2}). This property allows for the definition of the distributional normal traces on an oriented Lipschitz hypersurface $\Sigma \subset \Omega$. Choose open sets $\Omega_1, \Omega_2$ such that $\Sigma \subset \partial \Omega_1 \cap \partial \Omega_2$ and $n_\Sigma(x) = n_{\partial \Omega_1}(x) = -n_{\partial \Omega_2}(x)$ for $\mathcal{H}^{d-1}$-a.e. $x \in \Sigma$. Then, we define 
$$\tr_n(V, \Sigma_-) : = \tr_n(V, \partial \Omega_1), \qquad \tr_n(V, \Sigma_+) := -\tr_n(V, \partial \Omega_2) \qquad \text{ on } \Sigma.$$
The following is one of the key properties of the distributional normal trace.

\begin{proposition}[\cite{ACM05}*{Proposition 3.4}]\label{P:div and normal trace} 
Let $V\in \mathcal{MD}^\infty(\Omega)$ and $\Sigma\subset \Omega$ be any oriented Lipschitz hypersurface. Then 
$$
\abs{\div V}(\Sigma)=\int_\Sigma \abs{\tr_n (V,\Sigma_+)- \tr_n (V,\Sigma_-)}  \dd \mathcal H^{d-1}.
$$
\end{proposition}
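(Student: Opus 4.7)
The plan is to prove the sharper identity of signed Borel measures on $\Sigma$
\begin{equation*}
\div V\llcorner\Sigma=\bigl[\tr_n(V,\Sigma_+)-\tr_n(V,\Sigma_-)\bigr]\mathcal{H}^{d-1}\llcorner\Sigma,
\end{equation*}
from which the stated formula follows by taking the total variation: since both traces are in $L^\infty(\Sigma)$, the right-hand side is absolutely continuous with respect to $\mathcal{H}^{d-1}\llcorner\Sigma$, and its variation is obtained by integrating the modulus of the density.

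By a countable partition of unity subordinate to a covering of $\Sigma$ by Lipschitz graph patches, it is enough to verify the identity locally. I would fix $x_0\in\Sigma$ and choose a ball $U\ni x_0$ together with two Lipschitz open sets $\Omega_1,\Omega_2\subset\Omega$ such that $\Sigma\cap U=\partial\Omega_1\cap U=\partial\Omega_2\cap U$, such that $\Omega_1\cap U$ and $\Omega_2\cap U$ partition $U\setminus\Sigma$, and $n_\Sigma=n_{\partial\Omega_1}=-n_{\partial\Omega_2}$ on $\Sigma\cap U$; their existence is guaranteed by $\Sigma$ being locally the graph of a Lipschitz function (one picks the epigraph and the hypograph). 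For any $\varphi\in C^\infty_c(U)$, adding the Gauss--Green formula \eqref{eq: normal distributional trace} on $\Omega_1\cap U$ and $\Omega_2\cap U$, and using the locality of the trace to identify $\tr_n(V,\partial\Omega_1)=\tr_n(V,\Sigma_-)$ and $\tr_n(V,\partial\Omega_2)=-\tr_n(V,\Sigma_+)$ on $\Sigma\cap U$, yields
\begin{equation*}
\int_U\nabla\varphi\cdot V\dd x+\int_{U\setminus\Sigma}\varphi\dd(\div V)=\int_{\Sigma\cap U}\varphi\bigl[\tr_n(V,\Sigma_-)-\tr_n(V,\Sigma_+)\bigr]\dd\mathcal{H}^{d-1}.
\end{equation*}
At the same time, viewing $\varphi$ as a test function on $\Omega$ and using that $\div V$ is a Borel measure gives $\int_U\nabla\varphi\cdot V\dd x=-\int_U\varphi\dd(\div V)=-\int_{U\setminus\Sigma}\varphi\dd(\div V)-\int_{\Sigma\cap U}\varphi\dd(\div V)$. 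Subtracting the two relations cancels the integral on $U\setminus\Sigma$ and produces the local identity
\begin{equation*}
\int_{\Sigma\cap U}\varphi\dd(\div V)=\int_{\Sigma\cap U}\varphi\bigl[\tr_n(V,\Sigma_+)-\tr_n(V,\Sigma_-)\bigr]\dd\mathcal{H}^{d-1}\qquad\forall\varphi\in C^\infty_c(U),
\end{equation*}
which is precisely the desired identity near $x_0$.

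The main obstacle is the careful local construction of the auxiliary Lipschitz sets $\Omega_1,\Omega_2$ together with the invocation of the locality of the distributional normal trace (stated just before the proposition) to ensure that the restrictions $\tr_n(V,\partial\Omega_i)|_{\Sigma\cap U}$ glue, across the chosen atlas of $\Sigma$, into globally well-defined $L^\infty$ functions on $\Sigma$ that are independent of the ancillary choice of $\Omega_1,\Omega_2$. Once this is settled, patching the local identities via the partition of unity produces the global signed identity on $\Sigma$, and passing to variations concludes the proof.
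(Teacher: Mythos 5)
The paper does not prove this proposition; it imports it verbatim from \cite{ACM05}*{Proposition 3.4}, so there is no in-house argument to compare against. Your proof is correct and is essentially the standard derivation used in that reference: localize to a graph patch, apply the Gauss--Green identity \eqref{eq: normal distributional trace} on the two sides $\Omega_1\cap U$, $\Omega_2\cap U$ of $\Sigma$, compare with the defining identity for $\div V$ tested on all of $U$, and read off the signed measure identity $\div V\llcorner\Sigma=[\tr_n(V,\Sigma_+)-\tr_n(V,\Sigma_-)]\,\mathcal H^{d-1}\llcorner\Sigma$, whose total variation gives the claim; the signs check out against the paper's orientation conventions. The only technical point worth spelling out is the choice of the neighbourhood $U$: a generic ball may meet the epigraph/hypograph in a non-Lipschitz set (tangency of $\partial U$ with the graph), so one should take $U$ to be a cylinder adapted to the graph coordinates to guarantee that $\Omega_i\cap U$ is Lipschitz, that $\Omega_1\cap U$ and $\Omega_2\cap U$ genuinely partition $U\setminus\Sigma$, and hence that the locality of the distributional trace applies; with that choice, the final passage from ``equal actions on restrictions of $C^\infty_c(U)$ test functions'' to equality of measures is immediate from Riesz representation, since both measures are finite and concentrated on $\Sigma\cap U$.
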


The notion of distributional normal trace is usually too weak to deal with non-linear problems. In particular, if $V$ has vanishing distributional normal trace and $\rho$ is a bounded scalar  function, it is not guaranteed that $\rho V$ has a distributional normal trace and, even if it does, it may not vanish. To overcome this problem, we recall from \cite{DRINnew}*{Section 2.2} a stronger notion of normal trace, which is inspired by the theory of $BV$ functions. We also refer to  \cites{DRINV23,CDRINVN24,AFP00} for an extensive presentation of the topic.

\begin{definition} [\cite{DRINnew}*{Definition 2.6}] \label{d: inner/outer trace} 
Let $\Omega \subset \R^d$ be an open set,  let $v \in L^\infty(\Omega ;\R^m)$ and let $\Sigma \subset \Omega$ be a Lipschitz hypersurface oriented by  $n_{\Sigma}$. We say that $v^{\Sigma_{\pm}} \in L^\infty(\Sigma ;\R^m)$ is the inner/outer trace on $\Sigma$ if for any sequence $r_j \to 0$ there exists a $\mathcal{H}^{d-1}$-negligible set $\mathcal{N}_{\pm} \subset \Sigma$ such that 
\begin{equation}
    \lim_{j \to \infty} \fint_{B_{r_j}^{\pm}(x)} \abs{v(z) - v^{\Sigma_{\pm}}(x)} \dd z = 0 \qquad \forall x \in \Sigma \setminus \mathcal{N}_{\pm},
\end{equation}
 where we set
\begin{equation}
    B_{r}^{\pm}(x) := \left\{ z \in B_{r}(x) \,\colon \,\langle \pm\, n_{\Sigma}(x), z-x \rangle \geq 0 \right\}. 
\end{equation}
We say that $v$ has bilateral traces on $\Sigma$ if $v$ has both inner and outer traces on $\Sigma$. Similarly, given $V \in L^\infty(\Omega; \R^d)$, we say that $V^{\Sigma_{\pm}}_n \in L^\infty(\Sigma)$ is the inner/outer Lebesgue \emph{normal} trace on $\Sigma$ if for any sequence $r_j \to 0$ there exists a $\mathcal{H}^{d-1}$-negligible set $\mathcal{N}_{\pm} \subset \Sigma$ such that 
\begin{equation}
    \lim_{j \to \infty} \fint_{B_{r_j}^{\pm}(x)} \abs{V(z)\cdot n_\Sigma(x) - V^{\Sigma_{\pm}}_n(x)} \dd z = 0 \qquad \forall x \in \Sigma \setminus \mathcal{N}_{\pm}.
\end{equation}
\end{definition}

A direct consequence is that the trace of non-linear quantities of bounded vector fields enjoy the composition formula. 

\begin{lemma} [\cite{DRINnew}*{Corollary 2.9}] \label{c: composition trace} 
Let $\Omega \subset \R^d$ be an open set, let $\Sigma \subset \Omega$ be a Lipschitz hypersurface oriented by a normal unit vector $n_{\Sigma}$ and let $n,m \in \N$. Let $V \in L^\infty(\Omega; \R^n)$ be a vector field with inner/outer trace on $\Sigma$ according to \cref{d: inner/outer trace}. Then, for $g \in C(\R^n; \R^m)$ it holds that $g(V)$ has inner/outer trace on $\Sigma$ and 
\begin{equation} \label{eq: formula composition of trace}
   g(V)^{\Sigma_{\pm}} = \pm g(V^{\Sigma_{\pm}}).  
\end{equation}
Moreover, if $n=m=d$, then the inner/outer normal Lebesgue trace of $g(V)$ on $\Sigma$ is given by 
\begin{equation} \label{eq: formula composition of normal trace}
    g(V)^{\Sigma_{\pm}}_n = \pm g(V^{\Sigma_{\pm}}) \cdot n_\Sigma. 
\end{equation}
\end{lemma}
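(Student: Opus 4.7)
The plan is to reduce both formulas to a single uniform-continuity argument combined with Chebyshev's inequality. First I would set $M := \|V\|_{L^\infty(\Omega)}$, so that $V(z) \in \overline{B_M} \subset \R^n$ for $\mathcal{L}^d$-a.e.\ $z$ and, by the very definition of the Lebesgue trace, $V^{\Sigma_\pm}(x) \in \overline{B_M}$ for $\mathcal{H}^{d-1}$-a.e.\ $x \in \Sigma$. Since $g \in C(\R^n;\R^m)$ is uniformly continuous on the compact set $\overline{B_M}$, for every $\varepsilon > 0$ there exists $\delta = \delta(\varepsilon) > 0$ such that $|g(a)-g(b)| \leq \varepsilon$ whenever $a,b \in \overline{B_M}$ with $|a-b| \leq \delta$. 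Denote $C := \|g\|_{L^\infty(\overline{B_M})} < \infty$.

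Next I would fix an arbitrary sequence $r_j \to 0$ and a point $x \in \Sigma$ outside the $\mathcal{H}^{d-1}$-negligible exceptional set $\mathcal{N}_\pm$ furnished by \cref{d: inner/outer trace} applied to $V$ and this sequence. Decompose $B_{r_j}^\pm(x) = A_j \cup B_j$, where $A_j := \{z \in B_{r_j}^\pm(x) : |V(z) - V^{\Sigma_\pm}(x)| \leq \delta\}$. On $A_j$ the integrand $|g(V(z)) - g(V^{\Sigma_\pm}(x))|$ is at most $\varepsilon$, whereas on $B_j$ it is at most $2C$; Chebyshev's inequality then yields
$$
\frac{|B_j|}{|B_{r_j}^\pm(x)|} \leq \frac{1}{\delta} \fint_{B_{r_j}^\pm(x)} |V(z) - V^{\Sigma_\pm}(x)| \dd z,
$$
and the right-hand side tends to $0$ as $j \to \infty$ by the defining property of $V^{\Sigma_\pm}$. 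Taking the limsup in $j$ and then letting $\varepsilon \to 0$ gives
$$
\lim_{j \to \infty} \fint_{B_{r_j}^\pm(x)} |g(V(z)) - g(V^{\Sigma_\pm}(x))| \dd z = 0,
$$
which is precisely \eqref{eq: formula composition of trace}. For the normal Lebesgue trace \eqref{eq: formula composition of normal trace}, the same bound controls the integrand $|g(V(z)) \cdot n_\Sigma(x) - g(V^{\Sigma_\pm}(x)) \cdot n_\Sigma(x)|$, which is pointwise dominated by $|g(V(z)) - g(V^{\Sigma_\pm}(x))|$ since $n_\Sigma(x)$ is a fixed unit vector once $x$ has been frozen.

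I expect no genuine obstacle in this proof: the only analytic input is the uniform continuity of $g$ on the compact target range $\overline{B_M}$, combined with the scalar Chebyshev bound. The one minor bookkeeping point is that the exceptional null set may enlarge when one passes from $V$ to $g(V)$, but this is harmless because the inner/outer traces are defined only up to such sets, and the sequence $r_j$ has been frozen at the outset.
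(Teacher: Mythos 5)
Your argument is correct. Note that the paper does not actually prove \cref{c: composition trace}: it is imported verbatim from \cite{DRINnew}*{Corollary 2.9}, so there is no in-paper proof to compare against; your route (uniform continuity of $g$ on the compact range $\overline{B_M}$, the splitting of $B_{r_j}^{\pm}(x)$ into a good set and a bad set controlled by Chebyshev, and the pointwise domination by $|g(V(z))-g(V^{\Sigma_\pm}(x))|$ after freezing the unit vector $n_\Sigma(x)$) is the standard and, as far as one can tell, the intended one, and your handling of the quantifiers (fix the sequence $r_j$ first, then take the null set for $V$, enlarged at most by the null set where $|V^{\Sigma_\pm}|>M$) matches \cref{d: inner/outer trace} exactly. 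The only point you do not address is the $\pm$ sign appearing on the right-hand sides of \eqref{eq: formula composition of trace} and \eqref{eq: formula composition of normal trace}, which is a sign convention for the outer trace inherited from the cited paper rather than a mathematical content your argument misses.
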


By \cite{CDRINVN24}*{Theorem 1.4},  the inner/outer normal Lebesgue trace agrees with the distributional normal one, whenever both exist.  

\begin{theorem} \label{t: distributional trace vs lebesgue trace}
Given an open set $\Omega \subset \R^d$, let $V \in \mathcal{MD}^\infty(\Omega)$ and let $\Sigma \subset \Omega$ be a Lipschitz hypersurface oriented by $n_{\Sigma}$. Assume that $V$ has inner/outer normal Lebesgue trace on $\Sigma$ according to \cref{d: inner/outer trace}. Then, it holds 
$$ \tr_{n}(V, \Sigma_{\pm}) = V^{\Sigma_\pm}_n.$$
\end{theorem}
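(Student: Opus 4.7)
The strategy is to show that the two $L^\infty(\Sigma)$ representatives agree by pairing both against arbitrary $\varphi \in C^\infty_c(\R^d)$. I treat the case of $\Sigma_-$; the $\Sigma_+$ case is symmetric. By locality of both notions of trace and a bi-Lipschitz flattening of $\Sigma$, I may reduce to the model situation where $\Sigma = \R^{d-1}\times\{0\}$, $n_\Sigma = e_d$, and $\Omega_- = \{x_d<0\}$. The central idea is to introduce a \emph{one-sided} mollification of $V$ that is smooth up to $\Sigma$ from the inner side, and for which both traces coincide with the classical evaluation, then pass to the limit.

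Concretely, I pick $\eta \in C^\infty_c(B_1)$ with $\int \eta = 1$ and define the shifted mollifier
$$\rho_\varepsilon(z) := \varepsilon^{-d}\, \eta\!\left(\frac{z - 2\varepsilon e_d}{\varepsilon}\right),$$
so that $\supp \rho_\varepsilon \subset B_\varepsilon(2\varepsilon e_d) \subset \{z_d > \varepsilon\}$. Then $V_\varepsilon := V*\rho_\varepsilon$ is smooth on a neighborhood of $\overline{\Omega}_-$, and for $x \in \overline{\Omega}_-$ the values $V(x-z)$ involved in the convolution satisfy $(x-z)_d \leq -\varepsilon < 0$, hence they lie strictly inside $\Omega_-$. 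In particular, $\div V_\varepsilon = (\div V \llcorner \Omega_-) *\rho_\varepsilon$ on a neighborhood of $\overline{\Omega}_-$. Moreover, unwinding the convolution, for a.e. $x_0' \in \Sigma$ one has
$$\bigl|V_\varepsilon(x_0',0)\cdot e_d - V_n^{\Sigma_-}(x_0')\bigr| \;\lesssim\; \fint_{B_{3\varepsilon}^{-}(x_0', 0)} \bigl|V \cdot e_d - V_n^{\Sigma_-}(x_0')\bigr|\, \dd z \;\xrightarrow[\varepsilon \to 0]{}\; 0$$
by the half-ball Lebesgue property in \cref{d: inner/outer trace}.

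Since $V_\varepsilon$ is smooth up to $\Sigma$ from the inner side, its classical and distributional normal traces agree and the divergence theorem on $\Omega_-$ gives, for every $\varphi \in C^\infty_c(\R^d)$,
$$\int_\Sigma \varphi\, V_\varepsilon|_\Sigma \cdot e_d\, \dd x' \;=\; \int_{\Omega_-} \nabla\varphi\cdot V_\varepsilon\, \dd x + \int_{\Omega_-} \varphi\, \div V_\varepsilon\, \dd x.$$
Passing $\varepsilon \to 0$: the left-hand side converges to $\int_\Sigma \varphi\, V_n^{\Sigma_-}\, \dd x'$ by dominated convergence; the first term on the right converges to $\int_{\Omega_-} \nabla\varphi \cdot V\, \dd x$ by $L^1_{\loc}$-convergence of $V_\varepsilon \to V$ on $\Omega_-$; and the second converges to $\int_{\Omega_-} \varphi\, \dd(\div V)$, because on $\Omega_-$ the function $\div V_\varepsilon$ is the standard mollification of $\div V\llcorner \Omega_-$ (by duality, $\int_{\Omega_-} \varphi \div V_\varepsilon \dd x = \int_{\Omega_-} (\varphi*\check\rho_\varepsilon)\, \dd(\div V\llcorner \Omega_-)$ with $\varphi * \check\rho_\varepsilon \to \varphi$ uniformly on $\overline{\Omega}_-$). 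Comparing with \eqref{eq: normal distributional trace} applied to $\Omega_-$, the right-hand side limit equals $\langle \tr_n(V,\Sigma_-), \varphi\rangle$, and by the arbitrariness of $\varphi$ I conclude $\tr_n(V,\Sigma_-) = V_n^{\Sigma_-}$ almost everywhere on $\Sigma$.

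The main obstacle is the bi-Lipschitz flattening step: one must verify that the Piola-type pushforward of $V$ under the flattening chart preserves both the $\mathcal{MD}^\infty$ class and the two notions of trace in a compatible way. This relies on the locality of the distributional normal trace (\cref{P:div and normal trace}), together with the fact that under bi-Lipschitz maps, half-balls on either side of the hypersurface correspond to sets of bounded eccentricity, so that the Lebesgue-trace condition in \cref{d: inner/outer trace} transforms covariantly. A minor but worth-noting point is that any singular mass of $\div V$ supported on $\Sigma$ does not appear in either side of the limiting identity: the open-set integral $\int_{\Omega_-} \varphi\, \dd(\div V)$ excludes $\Sigma$, and $\rho_\varepsilon$ is supported strictly inside $\Omega_-$, so this mass is not picked up in the mollification either.
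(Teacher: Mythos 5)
The paper does not actually prove this statement --- it is imported from \cite{CDRINVN24}*{Theorem 1.4} --- so there is no internal proof to compare against; your argument is a self-contained substitute, and it is correct in substance. The core mechanism is sound: with $\supp\rho_\varepsilon\subset\{z_d>\varepsilon\}$ the field $V_\varepsilon$ on $\overline{\Omega}_-$ only samples $V$ at points of $B_{3\varepsilon}^-(x_0)$, so the Lebesgue-trace hypothesis of \cref{d: inner/outer trace} forces the boundary term of the divergence theorem to converge to $\int_\Sigma\varphi\,V_n^{\Sigma_-}$, while the interior terms converge to the Gauss--Green pairing \eqref{eq: normal distributional trace}; and your remark that a possible singular part of $\div V$ on $\Sigma$ is seen by neither side is exactly the right point to check. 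Two refinements. First, the duality identity for the divergence term is cleaner if stated with $g_\varepsilon(y):=\int_{\Omega_-}\varphi(x)\rho_\varepsilon(x-y)\dd x$, which is supported in $\{y_d<-\varepsilon\}$ and converges boundedly pointwise to $\varphi\mathds{1}_{\Omega_-}$; this (rather than uniform convergence of $\varphi*\check\rho_\varepsilon$ on $\overline{\Omega}_-$, which fails near $\Sigma$) is what dominated convergence against $|\div V|$ needs. Second, and more seriously, ``bounded eccentricity'' does not by itself justify the flattening: the half-balls $B_r^\pm(x)$ in \cref{d: inner/outer trace} are cut by the hyperplane orthogonal to $n_\Sigma(x)$, which rotates from point to point, whereas the flattened half-balls are cut by $\{y_d=0\}$, so one must additionally show that $B_r^-(x)$ and $B_r(x)\cap\{z_d<\gamma(z')\}$ differ by a set of measure $o(r^d)$ at $\mathcal{H}^{d-1}$-a.e.\ $x\in\Sigma$ (a Rademacher/Lebesgue-point argument for $\nabla\gamma$). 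This gap is repairable, but it is simpler to avoid it: run the same shifted mollification directly on the graph $\{x_d=\gamma(x')\}$, translating by $2(1+\Lip(\gamma))\varepsilon\, e_d$ instead of $2\varepsilon e_d$; then the sample points lie both strictly below the graph and inside $B_{C\varepsilon}^-(x_0)$ exactly, and the divergence theorem on the subgraph region produces $V_{\varepsilon,d}-\nabla\gamma(x')\cdot V_\varepsilon'=\sqrt{1+|\nabla\gamma(x')|^2}\,V_\varepsilon\cdot n_\Sigma$ on the boundary, which converges $\mathcal{H}^{d-1}$-a.e.\ to $\sqrt{1+|\nabla\gamma|^2}\,V_n^{\Sigma_-}$ by the same Lebesgue-trace hypothesis, with no change of variables needed.
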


\section{Proofs of our results} \label{s: proofs}

\subsection{Duchon--Robert type approximation} In this section, we give a more general version of \cref{thm: DR approximation}. More precisely, we establish upper bounds on the defect distribution associated to weak solutions to \eqref{IE} in some critical classes. 

\begin{theorem} \label{thm: DR approximation sharp}
Let $(\rho,u, p) \in L^\infty_{x,t}$ be a weak solution to \eqref{IE} in $\Omega \times (0,T)$ according to \cref{d: weak solution} with force $f \in L^1_{x,t}$. Let $D[u] \in \mathcal{D}'_{x,t}$ be the Duchon--Robert distribution defined by \eqref{eq: local energy balance}. Assume that $\rho$ satisfies the no-vacuum condition \eqref{no-vacuum} and that $\rho \in L^1_t BV_x$\footnote{In the following inequalities, the implicit constants depend only on suitable norms of $\rho, u$. For a.e. $t \in (0,T)$ denote by $\nabla^s \rho_t, E^s u_t$ the singular part of $\nabla \rho_t, E u_t$ with respect to $\mathcal{L}^{d}$ and we use \cref{l: curve of measure 1} and \cref{l: curve of measure 2} to define the space-time measures $\abs{\nabla^s \rho_t} \otimes \dd t, \abs{E^s u_t} \otimes \dd t$}.  
\begin{enumerate}
    \item[(i)] If $u \in L^1_t BV_x$, then $D[u] \in \mathcal{M}_{x,t}$ and it holds 
    \begin{equation} \label{eq: upper bound DR in BV - sharp}
        \abs{D[u]} \lesssim \abs{\nabla^s \rho_t} \otimes \dd t \qquad \text{ as measure in } \Omega \times (0,T). 
    \end{equation}
    \item[(ii)] If $u \in L^1_t BD_x$, then $D[u] \in \mathcal{M}_{x,t}$ and it holds 
    \begin{equation} \label{eq: upper bound DR in BD_x - sharp}
        \abs{D[u]} \lesssim  \abs{\nabla^s \rho_t}\otimes \dd t + \abs{E^s u_t} \otimes \dd t \qquad \text{ as measure in } \Omega \times (0,T). 
    \end{equation}
    \item[(iii)] If $u \in BD_{x,t}$ in the sense of \cref{d: BD x-t}, then $D[u] \in \mathcal{M}_{x,t}$ and it holds 
    \begin{equation} \label{eq: upper bound DR in BD_x,t - sharp}
        \abs{D[u]} \lesssim \abs{\nabla^s \rho_t} \otimes \dd t + \abs{E^s_{x,t} U}\footnote{Here, $U = (u,1)$ and $E^s_{x,t} U$ is the singular part of the space-time symmetric gradient of $U$ with respect to $\mathcal{L}^{d+1}$.} \qquad \text{ as measure in } \Omega \times (0,T).  
    \end{equation}
\end{enumerate} 
\end{theorem}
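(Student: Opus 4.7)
The strategy follows the Duchon--Robert--Ambrosio paradigm adapted to the inhomogeneous setting. Spatially mollify \eqref{IE} with a smooth even kernel $\eta_\e$, pair the mollified momentum equation with a suitable test built from $(\rho u)_\e/\rho_\e$, and combine with the mollified continuity equation tested against $|(\rho u)_\e|^2/(2\rho_\e^2)$. Routine manipulations using $\div u=0$ and the no-vacuum condition \eqref{no-vacuum} yield an approximate local energy balance whose defect reduces (in the limit $\e\to 0$) to
\begin{equation*}
D_\e[u] = \frac{\rho}{2\rho_\e}\bigl[\,u\otimes u : \nabla(\rho u)_\e - u\cdot\div((\rho u\otimes u)_\e)\,\bigr],
\end{equation*}
with every other remainder vanishing in $\mathcal D'_{x,t}$ by Lebesgue differentiation, in agreement with \eqref{eq: DR formula simplified}.

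The second step is to rewrite $D_\e[u]$ as a bilinear commutator in spatial increments. Using $\nabla(\rho u)_\e(x) = \e^{-1}\int\nabla\eta(z)(\rho u)(x+\e z)\,dz$ together with $\int\nabla\eta(z)\cdot u(x+\e z)\,dz = 0$ (from $\div u = 0$) to subtract the $\rho|u|^2$ contribution, and splitting $\rho(x+\e z) = \rho(x)+\delta_{\e z}\rho(x)$, one arrives at
\begin{equation*}
D_\e[u] = \frac{\rho}{2\rho_\e\,\e}\int_{B_1}\bigl[\nabla\eta(z)\cdot\delta_{\e z}u\bigr]\Bigl(\rho\,u\cdot\delta_{\e z}u + \delta_{\e z}\rho\,u\cdot u(\cdot+\e z)\Bigr)\,dz.
\end{equation*}
Splitting $\delta_{\e z}u$ and $\delta_{\e z}\rho$ into absolutely continuous and singular parts via \cref{L:Bv_gradient} (or \cref{L:Bd_gradient} in cases (ii) and (iii)) and applying \eqref{eq:DQweakb}, the AC--AC cross is $O(\e)$ and vanishes, while cross terms carrying $\delta^s_{\e z}\rho$ are bounded in the limit by multiples of $|\nabla^s\rho_t|\otimes dt$ using the $L^\infty$ control on the other factors.

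The main obstacle is eliminating the remaining pure-velocity contributions through an Ambrosio--Alberti optimization of $\eta$. After $L^\infty$--reduction, these terms take the universal form
\begin{equation*}
\int_{B_1}\bigl|\nabla\eta(z)\cdot M(x,t)z\bigr|\,dz \cdot \nu,
\end{equation*}
where $\nu$ is one of $|\nabla u_t|\otimes dt$, $|E u_t|\otimes dt$, $|E_{x,t}(u,1)|$ and $M$ is the corresponding polar $L^\infty(\nu)$ density. By \cite{C09}*{Lemma 2.6.2}, $\inf_\eta\int_{B_1}|\nabla\eta(z)\cdot Mz|\,dz = |\tr M|$, which vanishes $\nu$--a.e. by incompressibility: in case (i) both AC and singular $\nabla u$ contributions die since $\tr\nabla u_t = \div u = 0$, leaving only $|\nabla^s\rho_t|\otimes dt$ and proving \eqref{eq: upper bound DR in BV - sharp}; in case (ii) only the AC trace of $E u_t$ is killable (since \cref{L:Bd_gradient} only controls longitudinal increments), leaving $|E^s u_t|\otimes dt$ and yielding \eqref{eq: upper bound DR in BD_x - sharp}; in case (iii) the same argument applied to $(u,1)$ with the space-time incompressibility $\div_{x,t}(u,1) = 0$ yields \eqref{eq: upper bound DR in BD_x,t - sharp}. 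The subtle technical steps are the commutation of $\inf_\eta$ with the $\e\to 0$ limit (handled by a measurable selection argument as in \cite{DRINV23}) and the uniform control of the $\rho/\rho_\e$ prefactor across jumps of $\rho$, ensured by \eqref{no-vacuum}; these are the points where the inhomogeneous structure requires genuinely new bookkeeping beyond the homogeneous case of \cite{DRINV23}.
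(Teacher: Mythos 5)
Your overall architecture matches the paper's: mollify in space, test the momentum equation with $\varphi(\rho u)_\e/\rho_\e$, isolate a commutator quadratic in the increments of $u$, and kill it via the Alberti--Ambrosio optimization in the BV case, or via radial kernels and $\trace(E^a u)=\div u=0$ in the BD cases. The algebraic reduction of that commutator to $\frac{\rho}{2\rho_\e\e}\int[\nabla\eta(z)\cdot\delta_{\e z}u](\rho\,u\cdot\delta_{\e z}u+\delta_{\e z}\rho\,u\cdot u(\cdot+\e z))\,dz$ is correct and is a legitimate variant of the paper's Step 3.

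There is, however, a genuine gap at the very first step: you assert that after the manipulations the defect reduces to the single commutator \eqref{eq: DR formula simplified} ``with every other remainder vanishing in $\mathcal D'_{x,t}$ by Lebesgue differentiation.'' The paper states explicitly that \eqref{eq: DR formula simplified} holds only when $\rho\in L^1_t W^{1,1}_x$. For general $\rho\in L^1_t BV_x$ the computation produces additional commutators --- schematically $\div((\rho u)_\e)-u\cdot\nabla\rho_\e$, the pressure-weighted term $\rho_\e\div((\rho u)_\e)-(\rho u)_\e\cdot\nabla\rho_\e$, and $\frac{p_\e}{\rho_\e}\,\dd(\div(\rho u))_t-\frac{p_\e\rho}{\rho_\e^2}u\cdot\nabla\rho_\e$ (the paper's $J^2_\e,J^3_\e,J^4_\e$) --- whose limits are nonzero measures concentrated where $\nabla\rho_t$ is singular. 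These, not the $\delta_{\e z}\rho$ factor inside your bilinear form, are the actual source of the $\abs{\nabla^s\rho_t}\otimes\dd t$ term in all three bounds, and handling them requires machinery your proposal omits entirely: the decomposition $\div(\rho u)_t=u_t\cdot\nabla^a\rho_t+\mu_t$ with $\abs{\mu_t}\lesssim\abs{\nabla^s\rho_t}$ (\cref{L:divdec} combined with \cref{l: curve of measure 1} and \cref{l: curve of measure 2}), and a delicate spatial integration by parts of $-\int\frac{\rho\varphi}{\rho_\e}u\cdot\nabla p_\e$ for which the time regularity of $p_\e$ is an issue and the hypothesis $\rho\in L^1_t BV_x$ is genuinely used. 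Two smaller points: the claim that the ``AC--AC cross is $O(\e)$'' is unjustified, since the splitting $\delta_{\e z}u=\delta^a_{\e z}u+\delta^s_{\e z}u$ does not preserve $L^\infty$ bounds on the individual pieces (the paper instead keeps one full increment in $L^\infty$ and puts the $BV$ measure bound on the other); and in cases (ii)--(iii) the identity $\inf_\eta\int_{B_1}\abs{\nabla\eta(z)\cdot Mz}\,\dd z=\abs{\tr M}$ is not available because only radial kernels control longitudinal increments --- there the absolutely continuous part is killed by the exact cancellation $\int h'(\abs{y})\,y\otimes y/\abs{y}\,\dd y\propto\mathrm{Id}$ contracted with $E^a u$, with no absolute values and no infimum, while the singular part survives as stated.
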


We briefly comment on the assumptions in the cases $(ii)$-$(iii)$. As explained in \cref{r: BD/SBD x-t} there are no implications between $u \in L^1_t BD_x$ and $u \in BD_{x,t}$. In analogy with the homogeneous case, $L^1_t BD_x$ is a natural critical class for the regularity of the velocity field. We decided to include the case $BD_{x,t}$ to have \cref{cor: no dissipation BV/BD} a consequence of \cref{thm: DR approximation sharp} and \cref{thm: no dissipation} (see \cref{r: space-time BD BV}).  The proofs in the cases $(ii)$-$(iii)$ are rather similar, the only difference being that if $u \in L^1_t BD_x$ we are allowed to fix the time to exploit the spatial regularity, while for $u \in BD_{x,t}$ all the estimates are in space-time. In the stationary case, the approaches in $(ii)$-$(iii)$ coincide.

\begin{proof} [Proof of \cref{thm: DR approximation sharp}] 
Let $O$ be an open set such that $ O \subset \joinrel \subset \Omega \times (0,T)$. From now on, we neglect multiplicative constants depending only on suitable norms of $\rho, u,p$ in $\Omega$. Therefore, all the estimates below are uniform with respect to $O$. 

\textsc{\underline{Step 1}: Expanding the error terms.} We denote by 
\begin{equation} \label{eq: admissible kernel}
    \mathcal{K} := \left\{ \eta \in C^\infty_c(B_1) \colon \int_{B_1} \eta = 1, \ \eta \geq 0, \ \eta \text{ is even} \right\} 
\end{equation}
and we fix a convolution kernel $\eta \in \mathcal{K}$. Given $g \in L^1_{x,t}$, for $\e$ small enough we denote by $g_\e(x,t)$ the mollification of $g_t$ with respect to the spatial variable, which is well defined for $(x,t) \in O$. Given $\varphi \in C^\infty_c(O)$, we test the momentum equation in \eqref{IE} with 
\begin{align}\label{eq: testF}
   \psi_\e : = \varphi \frac{(\rho u)_\eps }{\rho_\eps}. 
\end{align}
We check that $(\rho u)_\e, \rho_\e$ are Lipschitz continuous in $O$ and $\rho_\e(x,t) \geq C >0$ for any $(x, t) \in O$. The latter follows by \eqref{no-vacuum} and the fact that $\eta$ is nonnegative. By the properties of convolution, $\rho_\e, (\rho u)_\e$ are smooth with respect to the spatial variable. Then, mollifying the transport equation and the momentum equation in \eqref{IE} we get that  
\begin{align}
    \pt \rho_\eps + \underbrace{\div ((\rho u)_\eps)}_{L^\infty (O)} = 0 \quad \text{and} \quad   \pt (\rho u)_\eps + \underbrace{\div ((\rho u \otimes u)_\eps) + \nabla p_\eps - f_\e}_{L^\infty(O)} = 0.
\end{align}

Thus, $\rho_\e, (\rho u)_\e$ are Lipschitz with respect to the time variable in $O$ and we conclude that $\psi_\e$ is Lipschitz continuous with compact support in $O$. Then, testing \eqref{eq: momentum} with \eqref{eq: testF}, we get
\begin{align} \label{eq: mollified transport and momentum}
    \underbrace{\int  \rho u \cdot \pt \left(\varphi \frac{(\rho u)_\eps }{\rho_\eps}\right) }_{I^1_\eps} + \underbrace{\int   \rho u \otimes u : \nabla \left(\varphi \frac{(\rho u)_\eps }{\rho_\eps}\right) }_{I^2_\eps} + \underbrace{\int   p \div \left(\varphi \frac{(\rho u)_\eps }{\rho_\eps}\right) }_{I^3_\eps} + \underbrace{ \int f \cdot \left( \varphi \frac{(\rho u)_\e}{\rho_\e} \right) }_{I_{\e}^4}=0. 
\end{align}
To study $I^1_\eps$, we use \eqref{eq: mollified transport and momentum} and we compute
\begin{align}
    I^1_\eps & = \int \rho u \cdot \pt \varphi \frac{(\rho u)_\eps }{\rho_\eps} + \int   \frac{\rho u  \varphi }{\rho_\eps} \cdot \pt (\rho u)_\eps - \int \rho u \cdot(\rho u)_\eps  \varphi \frac{\pt  \rho_\eps }{\rho^2_\eps}  \\ & = \underbrace{\int \frac{\rho \pt \varphi}{\rho_\eps}  u \cdot (\rho u)_\eps}_{I_\eps^{1,1}}  \underbrace{- \int   \frac{\rho   \varphi }{\rho_\eps}u\cdot  \div ((\rho u \otimes u)_\eps)}_{I_\eps^{1,2}} \underbrace{- \int   \frac{\rho   \varphi }{\rho_\eps} u \cdot \nabla p_\eps}_{I_\eps^{1,3}} + \underbrace{\int \frac{\rho \varphi  \div((u\rho)_\eps) }{\rho^2_\eps}  u \cdot(\rho u)_\eps}_{I^{1,4}_\eps} + \underbrace{ \int \frac{\rho u \varphi}{\rho_\e} \cdot f_\e }_{I_\e^{1,5}}.
\end{align}
To study the pressure term $I_\eps^{1,3}$, we need an integration by parts with respect to the spatial variables\footnote{This step is quite delicate and, since we need to fix the time, it seems crucial to have that $\rho \in L^1_t BV_x$. Here, the main issue is the time regularity of the spatial regularization of the pressure. }. Indeed, we use \cref{l: curve of measure 2} to compute $\abs{\nabla^s \rho} = \abs{\nabla^s \rho_t} \otimes \dd t$. Then, we infer that $(\pi)_\#\div(\rho u) \ll \mathcal{L}^1$, where $\pi: \Omega \times (0,T) \to (0,T)$ is the projection, and by \cref{l: curve of measure 1} we identify $\div(\rho u)$ with a curve of measures. Therefore, writing explicitly the integrals and recalling that $p_\e, \rho_\e$ are smooth with respect to the spatial variables, we have 
\begin{align}
    I_\eps^{1,3}& = - \int_0^T \int_\Omega \rho u \cdot \nabla \left(\frac{\varphi p_\eps}{\rho_\eps}\right) \dd x \dd t + \int_0^T \int_\Omega p_\eps \rho u \cdot \nabla \left( \frac{\varphi}{\rho_\eps} \right) \dd x \dd t  
    \\ & = \underbrace{ \int_0^T \int_\Omega \frac{\varphi p_\eps}{\rho_\eps} \dd (\div(\rho u))_t \dd t }_{I^{1,3,1}_\eps} + \underbrace{\int_0^T \int_\Omega p_\eps \frac{\rho u}{\rho_\eps} \cdot \nabla \varphi \dd x \dd t}_{I^{1,3,2}_\eps}- \underbrace{\int_0^T \int_\Omega p_\eps \frac{\rho}{\rho_\eps^2} \varphi u \cdot \nabla \rho_\eps \dd x \dd t }_{I^{1,3,3}_\eps}. 
\end{align}
For $I_\eps^2, I_\e^3$, by the chain rule, we have 
\begin{align}
    I_\eps^2 & = \underbrace{\int \frac{\rho}{\rho_\eps} u \otimes u : \left(\nabla \varphi \otimes (\rho u)_\eps  \right)}_{I_\eps^{2,1}} \underbrace{- \int \frac{\rho \varphi}{\rho_\eps^2} u \otimes u :  \left( \nabla \rho_\e \otimes (\rho u)_\eps \right)}_{I_\eps^{2,2}} + \underbrace{\int \frac{\rho \varphi}{\rho_\eps} u \otimes u : \nabla \left( (\rho u)_\eps \right)}_{I_\eps^{2,3}}
    \\  I_\eps^3 & = \underbrace{\int \frac{p}{\rho_\eps} \nabla \varphi \cdot (\rho u)_\eps}_{I_\eps^{3,1}} + \underbrace{\int \frac{p\varphi}{\rho_\eps} \div\left( (\rho u)_\eps\right)}_{I_\eps^{3,2}} - \underbrace{\int\frac{p\varphi}{\rho_\eps^2}  (\rho u)_\eps   \cdot \nabla \rho_\eps}_{I_\eps^{3,3}}
\end{align}
Since $f \in L^1_{x,t}, \rho, u, p \in L^\infty_{x,t}$ and $\rho$ satisfies \eqref{no-vacuum}, then $I_\eps^{1,1}+I_\eps^{2,1}+I_\eps^{3,1}+I_\eps^{1,3,2} + I^4_\e + I^{1,5}_\e$ converge to 
\begin{align}
    \int\rho  \abs{u}^2 \pt \varphi + \int \rho \abs{u}^2 u\cdot \nabla \varphi + \int 2 p u \cdot \nabla \varphi + \int 2 \varphi f \cdot u.  
\end{align}
We organize the other terms as 
\begin{align}
    & \langle J^1_{\eps}, \varphi \rangle := I_\eps^{1,2}+I_\eps^{2,3} = \int_0^T \int_\Omega  \frac{\rho   \varphi }{\rho_\eps} \left[  u \otimes u : \nabla \left( (\rho u)_\eps \right) - u\cdot  \div ((\rho u \otimes u)_\eps)\right] \dd x \dd t, \\
    & \langle J^2_{\eps},\varphi \rangle := I_\eps^{1,4} + I_\eps^{2,2} = \int_0^T \int_\Omega \frac{\rho \varphi   }{\rho^2_\eps}  u \cdot(\rho u)_\eps \left[ \div((u\rho)_\eps) - u \cdot \nabla \rho_\eps \right] \dd x \dd t, \\
    & \langle J^3_{\eps},\varphi \rangle := I_\eps^{3,2}+I_\eps^{3,3} = \int_0^T \int_\Omega \frac{p\varphi}{\rho^2_\eps} \left[ \rho_\eps \div\left(  (\rho u)_\eps\right) -  (\rho u)_\eps   \cdot \nabla \rho_\eps \right] \dd x \dd t, \\
    & \langle J^4_{\eps},\varphi \rangle := I_\eps^{1,3,1}+I_\eps^{1,3,3} = \int_0^T \left(\int_\Omega \frac{\varphi p_\eps}{\rho_\eps} \dd (\div(\rho u)_t) - \int_\Omega \frac{\phi p_\e}{\rho_\e^2} \rho u \cdot \nabla \rho_\e \dd x \right) \dd t .
\end{align}
Therefore, we have shown that the following equality in the sense of distributions
\begin{align}
    \pt \left( \rho \abs{u}^2 \right) + \div \left( u  \left(\rho \abs{u}^2 + 2p \right) \right) -2 f \cdot u = - 2 D[u] = -\lim_{\eps \to 0} \left( J^1_{\eps} + J^2_{\eps}+J^3_{\eps}+J^4_{\eps} \right).
\end{align}

\textsc{\underline{Step 2}: The terms concerning the density.} We claim that for every compact set $Q \subset O$ it holds\footnote{Here, the implicit constant depends only on norms of $\rho, u, p$ and it is uniform with respect to $Q, O$ and $\eta \in \mathcal{K}$. }
\begin{align} \label{eq: upper bound J^2,3,4}
    \limsup_{\eps \to 0}\norm{J_\eps^2+J_\eps^3+J^4_\eps}_{L^1(Q)}\lesssim [\abs{\nabla^s \rho_t} \otimes \dd t] (Q). 
\end{align}
We study $J^2_\eps$ and we leave the terms $J^3_\eps, J^4_\e$ to the reader, since it can be treated in the same way. By \cref{l: curve of measure 2} and \cref{L:divdec}, for a.e. $t$ we write 
$$\div(\rho u)_t = u_t \cdot \nabla^a \rho_t + \mu_t, $$
with $\abs{\mu_t} \lesssim \abs{\nabla^s \rho_t}$ \footnote{Here, we denote by $\nabla \rho_t = \nabla^a \rho_t + \nabla^s \rho_t$ the Radon-Nikodym decomposition of $\nabla \rho_t$ with respect to $\mathcal{L}^d$}. Then, letting $Q_t := \{ x \in \R^d \colon (x,t) \in Q \}$, for a.e. $t$ we have 
\begin{align}
   \norm{J^2_\eps(\cdot, t)}_{L^1(Q_t)}  & \lesssim \norm{\frac{\rho}{\rho^2_\eps}  u \cdot(\rho u)_\eps }_{L^\infty(Q)} \norm{  (\div \rho_t u_t )_\eps  - u_t \cdot (\nabla \rho_t)_\eps  }_{L^1(Q_t)} \\ 
   & \lesssim \norm{ (u_t \cdot \nabla^a \rho_t)_\e - u_t \cdot (\nabla^a \rho_t)_\eps}_{L^1(Q_t)} + \norm{(\mu_t)_\e  - u_t \cdot (\nabla^s \rho_t)_\eps}_{L^1(Q_t)} = : A_\e(t) + B_\e(t). 
\end{align}
Since $u_t\in L^\infty_{x}, \nabla^a \rho_t \in L^1_{t}$, we get $A_\e(t) \to 0$ as $\e \to 0$. For the singular parts, by the properties of convolution, we have 
\begin{align}
     \limsup_{\eps \to 0} B_\e(t)  & \lesssim \limsup_{\e \to 0} \norm{(\mu_t)_\e}_{L^1(Q_t)} +  \norm{(\nabla^s \rho_t)_\e}_{L^1(Q_t)} \lesssim \abs{\mu_t}(Q_t) + \abs{\nabla^s \rho_t}(Q_t)  \lesssim \abs{\nabla^s \rho_t}(Q_t). 
\end{align}
Setting $O_t := \{ x \in \R^d : (x,t) \in O \}$, we bound\footnote{This estimate is uniform with respect to $\e$. The implicit constant depends only on $\norm{u}_{L^\infty(O)}$. } 
\begin{align}
    A_\e(t) + B_\e(t) \lesssim \norm{(\nabla^a \rho_t)_\e}_{L^1(Q_t)} + \norm{(\nabla^s \rho_t)_\e}_{L^1(Q_t)} \lesssim \abs{\nabla \rho_t}(O_t) \in L^1_t. 
\end{align}
Then, by the dominated convergence theorem and Fatou's lemma, we infer 
\begin{equation}
    \limsup_{\e \to 0} \norm{J^2_\e}_{L^1(Q)} \lesssim \limsup_{\e \to 0} \norm{A_\e(t)}_{L^1_t} + \limsup_{\e \to 0} \norm{S_\e(t)}_{L^1_t} \lesssim \int_{0}^T \abs{\nabla^s \rho_t}(Q_t) \dd t. 
\end{equation}

\textsc{\underline{Step 3}: Velocity in $L^1_t BV_x$.} To handle $J_\e^1$, for $(x,t) \in O$ and $\e$ small enough, we write
\begin{align}\label{eq: Step4Need}
     J^1_\eps(x,t)=& \frac{\rho  }{\rho_\eps}(x,t)  \int_{B_1} \frac{1}{\e}  \left(\nabla \eta (y) \cdot \delta_{\eps y}u_t(x)\right)\left( u(x,t) \cdot (\rho u)(x+\eps y,t) \right)\dd y.
\end{align}
For any $y \in B_1$, we denote by $v^y (x,t) : = \nabla \eta(y) \cdot u(x,t)$. Then $v^y \in L^1_t BV_x$ and $\nabla_x v^y = \nabla \eta(y) \cdot \nabla_x u_t$. Then, for any compact set $Q \subset O$, by Fatou's lemma and using \eqref{BV_est_increment} with $v^y$, we estimate\footnote{Here, the implicit constant depends on $\rho,u,p$, but it is independent of $O, \eta$. \label{f: uniform constant} }
\begin{align} 
    \limsup \norm{J_\e^1}_{L^1(Q)} & \lesssim \limsup_{\eps\to 0} \int_{B_1} \left(  \int_Q \frac{1}{\e} \abs{\delta_{\e y} v^y_t(x)} \dd x \dd t \right)\dd y 
    \\ & \lesssim  \int_{B_1} \limsup_{\eps\to 0}\left(\int_Q \frac{1}{\e} \abs{\delta_{\e y} v^y_t(x)} \dd x \dd t \right)\dd y
    \\ &  \lesssim \int_{B_1} \int_0^T \abs{ \nabla \eta(y) \cdot (\nabla_x u_t) y}\left(Q_t\right) \dd t \dd y.
\end{align} 
Denoting by $\alpha = \abs{\nabla u_t} \otimes \dd t$, then by the Radon--Nikodym theorem we write $\nabla u_t \otimes \dd t = M(x,t) \dd \alpha$, where $M(x,t) \in \R^{d^2}$. Notice that, due to the divergence-free condition, we have
\begin{align}
    \trace(M(t,x))=0 \quad \text{for $\alpha$-a.e. $(x,t)$}.
\end{align} 
Then, we estimate 
\begin{equation} \label{eq: upper bound J^1}
    \limsup_{\e \to 0} \norm{J_\e^1}_{L^1(Q)} \lesssim \int_{Q} \left( \int_{B_1} \abs{\nabla \eta(y) \cdot M(x,t) y} \dd y \right) \dd \alpha \qquad \text{for any } Q \subset O \text{ compact}. 
\end{equation}
Therefore, combining \eqref{eq: upper bound J^2,3,4}, \eqref{eq: upper bound J^1} we infer that $D[u] \in \mathcal{M}(O)$ and it holds 
\begin{equation} 
    \abs{D[u]} \lesssim \abs{\nabla^s \rho_t} \otimes \dd t + \left( \int_{B_1} \abs{\nabla \eta(y) \cdot M(x,t) y} \dd y \right) \alpha  \qquad \text{ as measures in $O$}. 
\end{equation}
Hence, we find a uniform constant $C>0$ \footref{f: uniform constant}
\begin{equation}
    \left(\abs{D[u]} - C \abs{\nabla^s \rho_t \otimes \dd t} \right)^+ \leq C \left( \int_{B_1} \abs{\nabla \eta(y) \cdot M(x,t) y} \dd y \right) \alpha \qquad \text{ as measures in $O$}. 
\end{equation}
Recalling that $D[u]$ is independent of the choice of the convolution kernel $\eta \in \mathcal{K}$ given by \eqref{eq: admissible kernel}, by the so-called Alberti--Ambrosio optimization (see \cites{Ambr04, Cr09} or \cite{DRINV23}*{Proposition 2.8} for the precise statement needed here), we conclude 
\begin{equation}
    \left(\abs{D[u]} - C \abs{\nabla^s \rho}\right)^+ \leq C \left( \inf_{\eta \in \mathcal{K}} \int_{B_1} \abs{\nabla \eta(y) \cdot M(x,t) y} \dd y \right) \alpha = C \abs{\tr(M(x,t))} \alpha = 0  \qquad \text{ as measures in $O$}.  
\end{equation}
Therefore, $\abs{D}[u] - C \abs{\nabla^s \rho} \leq 0$ as measure in $O$, thus proving \eqref{eq: upper bound DR in BV - sharp}. 

\textsc{\underline{Step 4}: Velocity in $L^1_t BD_x$.} We assume $u \in L^1_t BD_x$. The main difference with the $BV$ case is that we are forced to choose radial kernels to show that $J^1_\e$ is uniformly bounded in $L^1_{x,t}$. Therefore, the Alberti--Ambrosio optimization is not available in this context. To be precise, we fix radial kernel $\eta \in \mathcal{K}$, that is $\eta(z)=h(\abs{z})$ for some smooth function $h: \R \to [0,+\infty)$. We split terms as in \textsc{Step 1}. Since the $BV$ regularity of the velocity field is not required in the study of the terms $J^2_\e, J^3_\e, J^4_\e$, we prove \eqref{eq: upper bound J^2,3,4} as in \textsc{Step 2}. Then, we focus on $J^1_\e$, where the spatial regularity of the velocity field plays a role. We claim that for any compact set $Q \subset O$ it holds that\footnote{Here, the implicit constant depends only on $\rho, u,p$ and it is uniform with respect to $O$.}
\begin{align} \label{eq: upper bound J^1 bis}
   \limsup_{\eps \to 0} \norm{J^1_\eps}_{L^1(Q)}\lesssim [\abs{E^s u_t} \otimes \dd t ](Q). 
\end{align}
Then, combining \eqref{eq: upper bound J^2,3,4} and \eqref{eq: upper bound J^1 bis}, we conclude that $D[u] \in \mathcal{M}(O)$ and \eqref{eq: upper bound DR in BD_x - sharp} holds. To show \eqref{eq: upper bound J^1 bis}, we write
\begin{align}
     J^1_\eps(x,t) = & \frac{\rho    }{\rho_\eps}(x,t)  \int_{B_1} \frac{h^\prime(\abs{y})}{\e \abs{y}}  \left( y \cdot \delta_{\eps y}u_t(x)\right)\left( u(x,t) \cdot (\rho u)(x+\eps y,t) \right)\dd y
\end{align}
and, by \cref{L:Bd_gradient}, we use Fatou's lemma to estimate
\begin{align}
\limsup_{\eps \to 0}  \norm{J^1_\eps}_{L^1(Q)} \leq  \int_0^T \limsup_{\eps \to 0}\norm{J^1_\eps(t)}_{L^1_x (Q_t)} \dd t.
\end{align}
Since $\rho \in L^\infty_{x,t}$ and it satisfies \eqref{no-vacuum}, we have
\begin{align}
     \norm{J_\eps^1(t)}_{L^1_x(Q_t)} \lesssim \norm{Z_\eps^{a}(t)}_{L^1_x(Q_t)} + \norm{Z_\eps^s(t)}_{L^1_x(Q_t)},
\end{align}
where we use the decomposition of the difference quotient given by \cref{L:Bd_gradient} and we set
\begin{align}
    Z^{a,s}_\eps (x,t)= \int_{B_1} \frac{h^\prime(\abs{y})}{\e \abs{y}}  \left( y \cdot \delta^{a,s}_{\eps y}u(x,t)\right)\left( u(x,t) \cdot (\rho u)(x+\eps y,t) \right)\dd y.
\end{align}
For the absolutely continuous part, by \eqref{eq:DQstrongconv in BD} and the continuity of translations, we have that 
\begin{align}
    \norm{  \e^{-1} y \cdot \delta^a_{\eps y}u_t  -   E^a u_t : y \otimes y }_{L^1_x(Q_t)} \to 0, \qquad \norm{(\rho u)(\cdot+\eps y, t) - \rho u(\cdot,t)}_{L^1_x(Q_t)} \to 0 \qquad \forall y \in B_1. 
\end{align}
Then, by \cref{L:Bd_gradient}, for a.e. $t$ we conclude that $Z_\eps^{a}(\cdot,t)$ converges in $L^1(Q_t)$ to 
\begin{align}\label{eq: convtoZ}
     \int_{B_1}   \frac{h^\prime(\abs{y})}{\abs{y}}  \left(  E^a u(x,t) : y \otimes y \right)\left( \left(\rho \abs{u}^2 \right) (x,t) \right)\dd y. 
\end{align}
The latter is equal to zero due to the divergence-free condition, since we compute 
\begin{align}
E^a u(x,t) : \int_{B_1}  h^\prime(\abs{y})  \left( \frac{  y \otimes y }{\abs{y}}\right)\dd y & =
     E^a u(x,t) : \int_{B_1}  \nabla \eta(y) \otimes y\dd y =  \trace (E^a u_t(x))  = 0.
\end{align}
For the singular part, since $\rho, u \in L^\infty_{x,t}$, we write
\begin{align}
       \norm{Z^s_\eps(t)}_{L^1_x(Q_t)}& \lesssim \int_{B_1} \frac{\abs{h^\prime(y)}}{\e \abs{y}}  \norm{ y \cdot \delta_{\eps y}^s u_t}_{L^1_x(Q_t)}\dd y 
\end{align}
and, by \eqref{eq:DQweakb in BD}, we infer that
\begin{align}
      \limsup_{\eps \to 0} \norm{Z^{s}_\eps(t)}_{L^1_x(Q_t)} & \lesssim \int_{B_1}\frac{\abs{h^\prime(y)}}{\abs{y}} \limsup_{\eps \to 0} \norm{ \e^{-1} y \cdot \delta_{\eps y}^s u_t}_{L^1_x(Q_t)}\dd y \lesssim  \abs{E^s u_t}(Q_t)
\end{align}
for a.e. $t$. Thus, by \eqref{eq: DQuniform bound in BD} and Fatou's lemma, we conclude that \eqref{eq: upper bound J^1 bis} holds. 

\textsc{\underline{Step 5}: Velocity in $ BD_{x,t}$.}
In the case \( u \in BD_{x,t} \), similar to \eqref{eq: upper bound J^1 bis}, the upper bounds for the terms $J^2_\e, J^3_\e, J^4_\e$ \eqref{eq: upper bound J^2,3,4} still holds. Therefore, it is enough to prove that for any compact $Q \subset O$ it holds
\begin{equation} \label{eq: upper bound J^1 tris}
    \limsup_{\eps \to 0} \norm{J^1_\eps}_{L^1(Q)} \lesssim \abs{E^s_{x,t} U}(Q).
\end{equation}
Then, by \eqref{eq: upper bound J^2,3,4}, we infer that $D[u] \in \mathcal{M}(O)$ and \eqref{eq: upper bound DR in BD_x,t - sharp} holds. To check \eqref{eq: upper bound J^1 tris}, as in the \textsc{Step 4}, we can write 
\begin{align}
    J^1_\eps(x,t) & = \int_{B_1} h^\prime(\abs{y}) \frac{(y,0) \cdot \delta_{(\eps y, 0)} U(x,t)}{\eps \abs{y}} (u(x,t)\cdot (\rho u)(x+ \e y, t)) \dd y \\  
    & = \int_{B_1} h^\prime(\abs{y}) \frac{(y,0) \cdot \delta^a_{(\eps y, 0)} U(x,t)}{\eps \abs{y}} (u(x,t)\cdot (\rho u)(x+\e y, t)) \dd y 
    \\ & \qquad + \int_{B_1} h^\prime(\abs{y}) \frac{(y,0) \cdot \delta^s_{(\eps y, 0)} U(x,t)}{\eps \abs{y}} (u(x,t) \cdot (\rho u)(x+ \e y, t)) \dd y = Z^a_{\e} (x,t)+ Z^s_\e(x,t), 
\end{align}
where we used that $U \in BD_{x,t}$ and the decomposition of $\delta_{(\e y, 0)} U$ given by \cref{L:Bd_gradient}\footnote{This step is quite delicate. Indeed, we write $\delta_{\e y} u$ as a space-time increment of $U$ so that we can use the space-time $BD$ regularity to decompose the difference quotient.}. Since we have 
$$\norm{\rho u (\cdot + \e y, \cdot )- \rho u}_{L^1(O)} \to 0 \qquad \forall y \in B_1, $$
we infer\footnote{The only difference with the computation in \textsc{Step 4} is that here we are not allowed to fix the time and all the estimates are in time-space.} 
\begin{align}
    Z^a_\e(x,t) \to & \int_{B_1} h^\prime(\abs{y}) \frac{E_{x,t}^a U (x,t) : (y,0) \otimes (y,0)}{\abs{y}} \left( \left( \rho \abs{u}^2\right) (x,t) \right) \dd y
    \\ & = \left( \left( \rho \abs{u}^2\right) (x,t) \right)  E^a u (x,t) : \int_{B_1} h^\prime(\abs{y}) \frac{y \otimes y}{\abs{y}} \dd y 
    \\ & = \left( \left( \rho \abs{u}^2\right) (x,t) \right) = \trace(E^a u(x,t)). 
\end{align}
Since $u_t$ is divergence-free for a.e. $t$, then $\div u = 0$ in $\mathcal{D}'_{x,t}$ and $\trace(E u) = \div u = 0 $ in $\mathcal{D}'_{x,t}$. Then, the latter vanishes almost everywhere in $O$.  For the singular part, again by \cref{L:Bd_gradient}, we have 
\begin{align}
    \limsup_{\eps \to 0} \norm{J^1_\eps}_{L^1(O)} 
    \lesssim \int_{B_1} \frac{\abs{h'(y)}}{\abs{y}} \limsup_{\eps \to 0}  \norm{\e^{-1} (y,0) \cdot \delta^s_{(\eps y, 0)} U}_{L_{t,x}^1(O)} \dd y  
    \lesssim \abs{E_{x,t}^s U}(O).
\end{align}
\end{proof}

\subsection{Dissipation on codimension 1 singular structures} 
In this section, we discuss the proof of \cref{thm: no dissipation}. This is based on the theory of measure-divergence vector field possessing bilateral traces on codimension $1$ Lipschitz hypersurfaces. 

\begin{proof} [Proof of \cref{thm: no dissipation}]
Let $\Gamma$ be a countably $\mathcal{H}^d$-rectifiable set, i.e. $\Gamma = \mathcal{N} \cup \bigcup_{i} \Sigma_i$, where $\mathcal{H}^d(\mathcal{N}) = 0$ and $\Sigma_i$ are (at most countably many) oriented Lipschitz hypersurfaces. Letting $V := \left( u\left( \rho \frac{\abs{u}^2}{2} + p \right), \rho\frac{\abs{u}^2}{2} \right) \in L^\infty_{x,t}$ and recalling that $f \in L^1_{x,t}$, we have that $V \in \mathcal{MD}^\infty_{x,t}$. Then, by $\abs{D[u]}(\mathcal{N}) =0$ (see e.g. \cite{DDI23}*{Theorem 3.1}) and, by \cref{P:div and normal trace}, for any space-time Lipschitz oriented hypersurface $\Sigma$ it holds  
\begin{equation}
\abs{D[u]}(\Sigma) =  \abs{- \div_{x,t} V + f \cdot u } (\Sigma)  = \abs{\div_{x,t}(V)}  (\Sigma) = \int_{\Sigma}   \abs{\tr_n(V, \Sigma_+) - \tr_n(V, \Sigma_-)} \dd \mathcal{H}^{d-1}, 
\end{equation} 
where $\tr_n(V, \Sigma_{\pm})$ is the inner/outer distributional normal trace of $V$ at $\Sigma$. Then, by \cref{P:div and normal trace} and \cref{t: distributional trace vs lebesgue trace}, we have  
\begin{equation} 
    \abs{D[u]}(\Sigma) = 0 \qquad \Longleftrightarrow \qquad V^{\Sigma_+}_n(y) =  V^{\Sigma_-}_n(y) \qquad \text{for $\mathcal{H}^d$-a.e. } y \in \Sigma. 
\end{equation} 
For simplicity, we denote by $n_{\Sigma} = : (n_x, n_t) \in \R^d \times \R$ and $ \rho^{\Sigma_{\pm}} = : \rho^{\pm}, u^{\Sigma_{\pm}} = : u^\pm, p^{\Sigma_{\pm}} = : p^{\pm}$. By \cref{c: composition trace}, $V$ has a Lebesgue inner/outer normal trace at $\Sigma$ and it holds 
\begin{equation}
    V^{\Sigma_\pm} =  u^{\pm } \cdot n_x \left( \rho^{\pm} \frac{\abs{u^\pm }^2}{2} + p^{\pm} \right) + \rho^{\pm} \frac{\abs{u^\pm}^2}{2} n_t. \label{eq: trace V} 
\end{equation} 
Therefore, we have to check 
\begin{equation} \label{eq: trace does not jump}
    \rho^+ ( u^{+} \cdot n_x + n_t) \frac{\abs{u^+ }^2}{2} + u^+ \cdot n_x p^+ = \rho^- ( u^{-} \cdot n_x + n_t) \frac{\abs{u^- }^2}{2} + u^- \cdot n_x p^- \qquad \text{for $\mathcal{H}^d$-a.e. } y \in \Sigma. 
\end{equation}
Since $(\rho,u,p)$ solves \eqref{IE} with $L^1$ force, we set $W := ( \rho u \otimes u + \textrm{Id } p, \rho u ) $ and, by \cref{P:div and normal trace} it holds 
\begin{equation}
    \int_{\Sigma} \abs{ \tr_n(W, \Sigma_+) - \tr_n(W, \Sigma_-) } \dd \mathcal{H}^{d} = \abs{ \div_{x,t} (W) } ( \Sigma ) = \int_{\Sigma} \abs{f} \dd  \mathcal{H}^{d} = 0. 
\end{equation}
Hence, we infer $\tr_n(W, \Sigma_+) =  \tr_n(W, \Sigma_-) $ and, by \cref{c: composition trace} and \cref{t: distributional trace vs lebesgue trace}, we conclude
\begin{equation} \label{eq: euler trace 1}
    \rho^+ u^+ (u^+ \cdot n_x) + p^+ n_x + \rho^+ u^+ n_t =  W^{\Sigma_+}_n = W^{\Sigma_-}_n =  \rho^- u^- (u^- \cdot n_x) + p^- n_x + \rho^- u^- n_t \qquad \text{for $\mathcal{H}^d$-a.e. } y \in \Sigma.
\end{equation}
Similarly, by \cref{c: composition trace} and \cref{t: distributional trace vs lebesgue trace}, since the space-time divergence of both $U := (u,1), Z := (u\rho, \rho)$ vanishes, by \cref{P:div and normal trace} we infer 
\begin{equation}
    u^+ \cdot n_x + n_t = U^{\Sigma_+}_n = U^{\Sigma_-}_n = u^- \cdot n_x + n_t \qquad \text{for $\mathcal{H}^d$-a.e. } y \in \Sigma, 
\end{equation}
\begin{equation} \label{eq: euler trace 2.0}
    \rho^+ (u^+ \cdot n_x + n_t) = Z^{\Sigma_+}_n = Z^{\Sigma_-}_n = \rho^- ( u^- \cdot n_x + n_t) \qquad \text{for $\mathcal{H}^d$-a.e. } y \in \Sigma. 
\end{equation}
In particular, we have
\begin{equation} \label{eq: euler trace 2}
    u^+ \cdot n_x = u^- \cdot n_x \qquad \text{for $\mathcal{H}^d$-a.e. } y \in \Sigma, 
\end{equation}
\begin{equation} \label{eq: euler trace 2.1}
    \rho^+ \neq \rho^- \qquad \Longrightarrow \qquad u^+ \cdot n_x + n_t = 0  \qquad \text{for $\mathcal{H}^d$-a.e. } y \in \Sigma. 
\end{equation}
Taking the scalar product of \eqref{eq: euler trace 1} by $n_x$ and using \eqref{eq: euler trace 2.0} and \eqref{eq: euler trace 2}, we find 
\begin{equation} \label{eq: euler trace 3}
    p^+ \abs{n_x}^2 = p^- \abs{n_x}^2 \qquad \text{for $\mathcal{H}^d$-a.e. } y \in \Sigma. 
\end{equation}
To conclude, we consider different cases. If $n_x = 0$, then $n_t = \pm 1$ and \eqref{eq: trace does not jump} is satisfied if and only 
\begin{equation} \label{eq: euler trace 2.2}
    \rho^+ \abs{u^+}^2 = \rho^- \abs{u^-}^2. 
\end{equation}
However, if $n_x = 0$, then by \eqref{eq: euler trace 1} we infer $\rho^+ u^+ = \rho^- u^-$ and by \eqref{eq: euler trace 2.0} we have $\rho^+ = \rho^-$. Therefore, \eqref{eq: trace does not jump} is satisfied. From now on, we assume $n_x \neq 0$. In this case, by \eqref{eq: euler trace 3} we infer $p^+ = p^-$ for $\mathcal{H}^d$-a.e. $y \in \Sigma$ such that $n_x(y) \neq 0$. If $\rho^+ \neq \rho^-$, then \eqref{eq: trace does not jump} follows by \eqref{eq: euler trace 2} and \eqref{eq: euler trace 2.1} . On the other side, if $\rho^+ = \rho^- >0$ (as follows by \eqref{no-vacuum}), by \eqref{eq: euler trace 1} we get that $\eqref{eq: trace does not jump}$ is satisfied. 
\end{proof}

\begin{remark} \label{r:final comments stationary}
By the analysis of the normal traces, we gain insight into the physical mechanism that prohibits dissipation along hypersurfaces. In the stationary case, fix a space hypersurface $\Sigma$. Particles that intersect $\Sigma$ transversally, i.e., with $u \cdot n \neq 0$, do not experience an instantaneous impulse, as shown by \eqref{eq: euler trace 1}, since they transport density across $\Sigma$ (see \eqref{eq: euler trace 2.1}), and the hydrodynamic pressure remains continuous across $\Sigma$ (see \eqref{eq: euler trace 3}). Indeed, the velocity is allowed to jump when particles approach $\Sigma$ with $u \cdot n = 0$. In this case, by \eqref{eq: trace does not jump} and due to the incompressibility of the velocity field (see \eqref{eq: euler trace 2}), the jump occurs only in the tangential component and it is evident that no dissipation occurs at $\Sigma$. The situation is different in a compressible model such as 1D Burgers, where dissipation occurs due to the instantaneous drop in the velocity of particles at the shock, resulting from inelastic collisions. From our investigation, we find a substantial qualitative difference between the dissipation mechanisms in the incompressible Euler equations (even in the inhomogeneous case) and in the 1D Burgers equation.
\end{remark}

\begin{remark} \label{r: final comments 2 fluids}
By the analysis of the normal traces we have a description of the dynamics in the case of a system made by two immiscible fluids. Assume that $\rho = \rho_1 \mathds{1}_A + \rho_2 \mathds{1}_B$ for some $\rho_1 \neq \rho_2$ nonnegative and $A, B$ disjoint open sets where $\overline{A} \cup \overline{B} = \Omega \times (0,T)$, and the jump set $\Sigma$ is a Lipschitz oriented space-time hypersurface. Under the assumptions of \cref{thm: no dissipation}, then by the conservation of mass, i.e. the analysis of the normal traces for the continuity equation, it follows $n_t = - u \cdot n_x$ along $\Sigma$. Hence, the time slice $\Sigma_t$ is pushed by the velocity field according to the formula above. In the stationary case, the velocity is tangent to a space hypersurface where the density exhibits a jump, coherently with the fact that the hypersurface is stationary. Lastly, we conclude that the dissipation is not concentrated on the jump set $\Sigma$ of the density because the velocity is always tangential to $\Sigma$ (see \cref{r:final comments stationary}).  
\end{remark}

\begin{proof} [Proof of \cref{cor: no dissipation BV/BD}] 
To begin, we recall that $\rho \in BV_{x,t}$ implies $\nabla \rho \in \mathcal{M}_{x,t}$. If $u \in L^1_t BV_x$, then $D[u]$ is a finite Borel measure by \cref{thm: DR approximation sharp} and, since $\rho \in SBV_{x,t}$, we find a countably $\mathcal{H}^d$-rectifiable set $\Sigma$ such that $D[u] = D[u] \llcorner \Sigma$ (see \cref{d: SBV - SBD}). Since $p$ has bilateral traces at any space-time oriented Lipschitz hypersurface by assumptions and the same holds for $\rho, u$ by \cref{T:trace_in_BV-BD}, then $\abs{D[u]}(\Sigma) = 0$ follows by \cref{thm: no dissipation}. Therefore, we have $D[u]\equiv 0$ in $\mathcal{M}_{x,t}$. If $u \in SBD_{x,t}$ in the sense of \cref{d: BD x-t}, the argument is the same, since the support of $E^s_{x,t} U$ is countably $\mathcal{H}^d$-rectifiable. 
\end{proof}

\begin{proof} [Proof of \cref{cor: global conservation}]
Fix $t \in (0,T)$ such that $u_t$ has zero Lebesgue normal boundary trace at $\partial \Omega$ according to \cref{d: inner/outer trace}. Since $\rho, u, p \in L^\infty_{x,t}$, by \cref{c: composition trace} we infer that $Z_t := \left( \frac{\rho_t \abs{u_t}^2}{2} + p_t \right) u_t$ has zero Lebesgue normal boundary trace at $\partial \Omega$. Then, by \cite{DRINV23}*{Proposition 2.3}, we have  
\begin{equation} \label{eq: limit Z to 0}
    \lim_{\e \to 0} \int_{\Omega} \abs{Z_t \cdot \nabla \chi_\e}\dd x = 0, \qquad \text{where } \chi_\e(x) : = \min \{ 1, \e^{-1} d_{\partial \Omega}(x) \}. 
\end{equation}
Since $\chi_\e \in W^{1,\infty}_0(\Omega)$, by a standard approximation argument we infer that $\psi(x,t) := \alpha(t) \chi_\e(x)$ can be used as a test function in \eqref{eq: local energy balance} for any $\alpha \in C^\infty_c((0,T))$. Thus, for any $\e>0$, we have  
\begin{equation}
    \underbrace{\int_0^T \left( \int_{\Omega} \frac{\rho \abs{u}^2}{2} \chi_\e \dd x  \right) \partial_t \alpha \dd t}_{ I^1_\e} + \underbrace{\int_0^T \left( \int_{\Omega} Z \cdot \nabla \chi_\e \dd x \right) \alpha \dd t}_{I_\e^2} = - \underbrace{\int_{0}^T \left( \int_\Omega f \cdot u \chi_\e \dd x \right) \alpha \dd t}_{I_\e^3}. 
\end{equation}
Since $\varphi_\e \to \mathds{1}_{\Omega}$ pointwise and $\abs{\varphi_\e (x)} \leq 1$ for any $x$, by the dominated convergence theorem we conclude  
\begin{equation}
    \lim_{\e \to 0} I^1_\e = \int_0^T \left( \int_\Omega \frac{\rho \abs{u}^2}{2} \dd x \right) \partial_t \alpha \dd t, \qquad \lim_{\e \to 0} I_\e^3 = \int_0^T \left( \int_\Omega f \cdot u \dd x \right) \alpha \dd t. 
\end{equation}
To handle $I^2_\e$, by \cite{Fed}*{Theorem 3.2.39}, we find $\e_0>0$ depending only on $\partial \Omega$ such that 
\begin{equation}
    \sup_{\e < \e_0} \int_{\Omega} \abs{Z_t \cdot \nabla \varphi_\e} \dd x \leq \norm{Z_t}_{L^\infty_x} \frac{\mathcal{L}^d((\partial \Omega)_\e)}{\e} \leq 2 \norm{Z_t}_{L^\infty_x} \mathcal{H}^{d-1}(\partial \Omega) \in L^1_t. 
\end{equation}
Then, by the inequality above, \eqref{eq: limit Z to 0} and the dominated convergence theorem, we infer $I^2_\e \to 0$ as $\e \to 0$, thus proving  
\begin{equation}
    \int_0^T \left( \int_\Omega \frac{\rho \abs{u}^2}{2} \dd x \right) \partial_t \alpha \dd t = - \int_0^T \left( \int_\Omega f \cdot u \dd x \right) \alpha \dd t \qquad \forall \alpha \in C^\infty_c((0,T)).
\end{equation}
\end{proof}

\bibliographystyle{plain} 
\bibliography{biblio}

\end{document}